\long\def\comment#1{}
\newtheorem{corollary}{Corollary}
\newtheorem{assumption}{Assumption}
\newtheorem{remark}{Remark}
\newtheorem{lemma}{Lemma}
\newtheorem{theorem}{Theorem}
\begin{document}

\setlength{\arraycolsep}{0.3em}

\title{Distributed Online Optimization for Multi-Agent Networks with Coupled Inequality Constraints
\thanks{}}

%\author{Xiuxian Li, Xinlei Yi, and Lihua Xie
\author{Xiuxian Li, Xinlei Yi, and Lihua Xie
\thanks{This work was supported by the Ministry of Education of Singapore under MoE Tier 1 Research Grant RG72. Corresponding author: L. Xie.}
\thanks{X. Li and L. Xie are with School of Electrical and Electronic Engineering, Nanyang Technological University, 50 Nanyang Avenue, Singapore 639798 (e-mail: xxli@ieee.org; elhxie@ntu.edu.sg).}
\thanks{X. Yi is with the ACCESS Linnaeus Centre, Electrical Engineering, KTH Royal Institute of Technology, 100 44, Stockholm, Sweden (e-mail: xinleiy@kth.se).}
}

\maketitle

\setcounter{equation}{0}
\setcounter{figure}{0}
\setcounter{table}{0}
%%%%%%%%%%%%%%%%%%%%%%%%%%%%%%%%%%%%%%%%%%%%%%%%%%%%%%%%%%%%%%%%%%%%%%%%%%%%%%%%%%%%%%%%%

\begin{abstract}
This paper investigates the distributed online optimization problem over a multi-agent network subject to local set constraints and coupled inequality constraints, which has a lot of applications in many areas, such as wireless sensor networks, power systems and plug-in electric vehicles. In this problem, the cost function at each time step is the sum of local cost functions with each of them being gradually revealed to its corresponding agent, and meanwhile only local functions in coupled inequality constraints are accessible to each agent. To address this problem, a modified primal-dual algorithm, called distributed online primal-dual push-sum algorithm (DOPP), is developed in this paper, which does not rest on any assumption on parameter boundedness and is applicable to unbalanced networks. It is shown that the proposed algorithm is sublinear for both the dynamic regret and the violation of coupled inequality constraints. Finally, the theoretical results are supported by a simulation example.
\end{abstract}

\begin{IEEEkeywords}
Distributed online optimization, multi-agent networks, primal-dual, push-sum, coupled inequality constraints.
\end{IEEEkeywords}

\section{Introduction}\label{s1}

With the rapid development of advanced communication and computing technologies and low-cost devices, distributed optimization problems have recently attracted much attention from diverse communities, e.g., systems and control community, because a large number of practical problems boil down to distributed optimization problems over multi-agent networks, such as machine learning, statistical learning, sensor networks, resource allocation, formation control, and power systems \cite{bullo2009distributed,rabbat2004distributed,tsitsiklis1986distributed,li1987distributed,xu2017distributed}. Distinct from classic centralized optimization, distributed optimization involves multiple agents over a network which have their individual private information, and there exist no centralized agents that can access the entire information over the network. As such, an individual agent does not have adequate information to handle the optimization problem alone, and all agents need to exchange their local information in order to cooperatively solve the global optimization problem, see \cite{nedic2009distributed,aybat2018distributed,xu2018bregman,li2019distributed}, etc.

This paper focuses on distributed online optimization, which has numerous applications, such as prediction from expert advice, online spam filtering, online shortest paths, portfolio selection, and recommendation systems \cite{hazan2016introduction}. Note that online optimization was first investigated for centralized scenarios in machine learning community \cite{zinkevich2003online,hazan2007logarithmic,shalev2012online}. In centralized online optimization, there exists a sequence of time-dependent convex cost functions, which are not known as a priori knowledge and only revealed gradually. To be specific, the cost function at current time slot is accessible only after a decision is made at the current time. To measure the performance of an online algorithm, it is conventional to compare the accumulated cost associated with the sequential cost functions incurred by the algorithm at each time step with the cost incurred by the best fixed/dynamic decisions in hindsight, i.e., the minimal cost under the condition that all the cost functions at all times are known, and the metric, the difference between the two costs, is called static/dynamic regret. In general, an online algorithm is declared ``good'' if the regret is sublinear. For example, the author in \cite{zinkevich2003online} considered the online optimization problem subject to feasible set constraints, and an online subgradient projection algorithm was proposed. Later, the authors in \cite{hazan2007logarithmic} and \cite{shalev2012online} further addressed the same problem as in \cite{zinkevich2003online}. Recently, a sequence of time-varying inequality constraints have been treated for the online optimization in \cite{neely2017online,paternain2016online,chen2017online}.

Due to the emergence of complex tasks and big data in modern life, a single agent in general cannot acquire enough information to perform a complicated task because of its limited sensing and computational ability, etc. Hence, it is beneficial and preferable for a family of agents to accomplish an optimization task in a cooperative manner. As a consequence, recent years have witnessed a wide spectrum of research on distributed online optimization over multi-agent networks, such as \cite{mateos2014distributed,akbari2017distributed,nedic2015decentralized,koppel2015saddle,shahrampour2017online,shahrampour2018distributed,hosseini2016online,lee2017stochastic,
yuan2017adaptive,lee2017sublinear,paternain2019distributed,sharma2020distributed}, in which a collection of agents cooperatively deal with an online optimization problem. For example, distributed online optimization problems without constraints were considered in \cite{mateos2014distributed} by an online subgradient descent algorithm with the proportional-integral disagreement and in \cite{akbari2017distributed} by a distributed online subgradient push-sum algorithm. Also, distributed online optimization has been further studied under global/local set constraints, such as, a Nesterov based primal-dual algorithm \cite{nedic2015decentralized}, a variant of the Arrow-Hurwicz saddle point algorithm \cite{koppel2015saddle}, a mirror descent algorithm \cite{shahrampour2017online,shahrampour2018distributed}, a dual subgradient averaging algorithm \cite{hosseini2016online}, and a distributed primal-dual algorithm \cite{lee2017stochastic}. In addition, besides local feasible set constraints, local inequality constraints were considered in \cite{yuan2017adaptive} with the design of a consensus-based adaptive primal-dual subgradient algorithm. As an application of distributed online optimization, smart grids were discussed in \cite{zhou2017incentive}. More recently, a general constraint, i.e., a coupled inequality constraint, was investigated in \cite{lee2017sublinear} for distributed online optimization, where a distributed primal-dual algorithm is proposed and a sublinear static regret is achieved. It is known that coupled inequality constraints have a multitude of applications in optimal wireless networking \cite{lee2017sublinear}, smart grids, plug-in electric vehicles \cite{vujanic2016decomposition}, etc. It should be noted that coupled inequality constraints have been addressed for distributed optimization in \cite{chang2014distributed,mateos2017distributed,falsone2017dual,notarnicola2019constraint,notarnicola2017duality,li2019distributed4}, but \cite{lee2017sublinear} is the first one to consider distributed optimization with coupled inequality constraints in the online setup. However, \cite{lee2017sublinear} assumes the boundedness of Lagrange multipliers for its algorithm, which limits its applicability, since the multipliers are generated by the designed algorithm.

This paper revisits distributed online optimization subject to coupled inequality constraints, where all involved functions, including objective and constraint functions, are revealed gradually over time, and all agents are unaware of future information. To solve this problem, a different algorithm from \cite{lee2017sublinear} is proposed, which can achieve a sublinear dynamic regret under relaxed conditions. The contributions of this paper can be summarized as follows:
\begin{enumerate}
  \item In comparison with \cite{lee2017sublinear}, the results in this paper do not rely on the assumption that Lagrange multipliers generated by the proposed algorithm are bounded. Note that the removal of this assumption is nontrivial.
  \item Balanced communication graphs have been used for all agents' information exchange in \cite{lee2017sublinear}. In contrast, more general interaction graphs, i.e., unbalanced graphs, are considered in this paper for distributed online optimization. To cope with the imbalance of communication graphs, a push-sum idea \cite{kempe2003gossip,benezit2010weighted,dominguez2011distributed,tsianos2012push,nedic2015distributed,nedic2016stochastic,xi2017dextra} is exploited for designing our algorithm in order to counteract the effect of graph's imbalance.
  \item The dynamic regret is used for measuring the performance of the designed algorithm, which is shown to be sublinear when a weighted path variation is sublinear. Meanwhile, the metric for the violation of inequality constraints is also proved to be sublinear. Moreover, as a special case, the convergence speed for the time-invariant distributed optimization is provided.
\end{enumerate}

The rest of this paper is structured as follows. Section II presents some preliminary knowledge and formulates the considered problem. Section III provides the main results of this paper, and subsequently, a simulation example is provided for supporting the theoretical results in Section IV. Section V concludes this paper.

{\em Notations:} Denote by $[N]:=\{1,2,,\ldots,N\}$ the index set for a positive integer $N$. The set of $n$-dimensional vectors with nonnegative entries is denoted by $\mathbb{R}_+^n$. Let $col(z_1,\ldots,z_k)$ be the concatenated column vector of $z_i\in\mathbb{R}^n,i\in [k]$. Denote by $\|\cdot\|$ and $\|\cdot\|_1$ the standard Euclidean norm and $\ell_1$-norm, respectively. $x^\top$ and $\langle x,y\rangle$ denote the transpose of a vector $x$ and the standard inner product of $x,y\in\mathbb{R}^n$, respectively. Let $[z]_+$ be the component-wise projection of a vector $z\in\mathbb{R}^n$ onto $\mathbb{R}^n_+$ and $\{z\}_i$ be the $i$-th entry of $z$. Let $\mathbf{1},\mathbf{0}$ be the compatible column vectors of all entries $1$ and $0$, respectively. $I$ is the identity matrix of compatible dimension. Given two functions $h_1$ and $h_2$, $h_1=\Omega(h_2)$, $h_1=O_+(h_2)$, and $h_1=O(h_2)$ mean that there exist positive constants $C_1,C_2,C_3$ such that $h_1(x)\geq C_1 h_2(x)$, $h_1(x)\leq C_2 h_2(x)$, and $|h_1(x)|\leq C_3 h_2(x)$ for all $x$ in their domain, respectively. Denote by $\otimes$ the Kronecker product.

\section{Preliminaries}\label{s2}

\subsection{Graph Theory}\label{s2.1}

Denote by $\mathcal{G}_t=(\mathcal{V},\mathcal{E}_t)$ a simple graph at time slot $t$, where $\mathcal{V}=\{1,\ldots,N\}$ is the node set and $\mathcal{E}_t\subset\mathcal{V}\times\mathcal{V}$ is the edge set at time instant $t$. An edge $(j,i)\in\mathcal{E}_t$ means that node $j$ can route information to node $i$ at time step $t$, where $j$ is called an in-neighbor of $i$ and conversely, $i$ is called an out-neighbor of $j$. Denote by $\mathcal{N}_{i,t}^+=\{j:(j,i)\in\mathcal{E}_t\}$ and $\mathcal{N}_{i,t}^-=\{j:(i,j)\in\mathcal{E}_t\}$ the in-neighbor and out-neighbor sets of node $i$, respectively. It is assumed that $i\in\mathcal{N}_{i,t}^+$ and $i\in\mathcal{N}_{i,t}^-$ for all $i\in [N]$. The in-degree and out-degree of node $i$ at time $t$ are respectively defined by $d_{i,t}^+=|\mathcal{N}_{i,t}^+|$ and $d_{i,t}^-=|\mathcal{N}_{i,t}^-|$. A directed path is a sequence of directed consecutive edges, and a graph is called strongly connected if there is at least one directed path from any node to any other node in the graph. The adjacency matrix $A_t=(a_{ij,t})\in\mathbb{R}^{N\times N}$ at time $t$ is defined by: $a_{ij,t}>0$ if $(j,i)\in\mathcal{E}_t$, and $a_{ij,t}=0$ otherwise.

For the communication graph, the following standard assumptions (e.g., \cite{bullo2009distributed,nedic2009distributed,shahrampour2018distributed}) are imposed in this paper.
\begin{assumption}\label{a1}
For all $t\geq 0$, $\mathcal{G}_t$ satisfies:
\begin{enumerate}
  \item There exists a constant $0<a<1$ which lower bounds all nonzero weights, that is, $a_{ij,t}\geq a$ if $a_{ij,t}>0$, and $a_{ii,t}\geq a$ for all $i\in[N]$;
  \item The adjacency matrix $A_t$ is column-stochastic, i.e., $\sum_{i=1}^N a_{ij,t}=1$ for all $j\in [N]$;
  \item There exists a constant $Q>0$ such that the graph $(\mathcal{V},\cup_{l=0,\ldots,Q-1}\mathcal{E}_{t+l})$ is strongly connected for all $t\geq 1$.
\end{enumerate}
\end{assumption}

It is worth pointing out that Assumption \ref{a1} is less conservative than that in \cite{lee2017sublinear}, where $A_t$ is assumed to be doubly stochastic, i.e., the graph is balanced.

\subsection{Optimization Theory}\label{s2.2}

The {\em projection} of a point $x\in\mathbb{R}^n$ onto a closed convex set $S\subset\mathbb{R}^n$ is defined to be the point that has the shortest distance to $x$, that is, $P_{S}(x):=\arg\min_{y\in S}||x-y||$, satisfying
\begin{align}
&(x-P_S(x))^\top(y-P_S(x))\leq 0,~~~\forall x\in\mathbb{R}^n,~\forall y\in S       \label{pr1}\\
&\|P_S(z_1)-P_S(z_2)\|\leq \|z_1-z_2\|,~~~\forall z_1,z_2\in\mathbb{R}^n.        \label{pr2}
\end{align}

For a convex function $g:\mathbb{R}^n\to\mathbb{R}$, a {\em subgradient} of $g$ at a point $x\in\mathbb{R}^n$ is defined to be a vector $s\in\mathbb{R}^n$ such that
\begin{align}
g(y)-g(x)\geq s^\top(y-x),~\forall y\in\mathbb{R}^n,        \label{5}
\end{align}
and the set of subgradients at $x$ is called the {\em subdifferential} of $g$ at $x$, denoted by $\partial g(x)$. When the function $g$ is differentiable, the subdifferential at any point only has a single element, which is exactly the gradient, denoted by $\nabla g(x)$ at $x$.

A function $L:\Upsilon\times\Lambda\to\mathbb{R}$, where $\Upsilon\subset\mathbb{R}^n,\Lambda\subset\mathbb{R}^m$, is called {\em convex-concave} if $L(\cdot,\lambda):\Upsilon\to\mathbb{R}$ is convex for every $\lambda\in\Lambda$ and $L(x,\cdot):\Lambda\to\mathbb{R}$ is concave for each $x\in\Upsilon$. A {\em saddle point} of $L$ is defined to be a pair $(x^*,\lambda^*)$ such that
\begin{align}
L(x^*,\lambda)\leq L(x^*,\lambda^*)\leq L(x,\lambda^*),~~~\forall x\in\Upsilon,\lambda\in\Lambda.         \label{6}
\end{align}

Given an optimization problem
\begin{align}
\min_{x\in S} h_1(x),~~~~s.t.~~h_2(x)\leq \textbf{0},       \label{op1}
\end{align}
where $h_1(x):\mathbb{R}^n\to \mathbb{R}$ and $h_2(x):\mathbb{R}^n\to\mathbb{R}^m$ are convex functions, and $S\subset\mathbb{R}^n$ is a nonempty convex and closed set. In this paper an inequality or equality is understood componentwise. For (\ref{op1}), usually called the {\em primal} problem, the Lagrangian function is defined by $L(x,\mu)=h_1(x)+\mu^\top h_2(x)$, where $\mu$ is called the {\em dual variable} or {\em Lagrange multiplier} associated with the problem. Then, the {\em Lagrangian dual} problem is given as
\begin{align}
\max_{\mu\in \mathbb{R}_+^m} q(\mu),       \label{op3}
\end{align}
where $q(\mu):=\min_{x\in S}L(x,\mu)$, called {\em Lagrange dual function}. Let $h^*$ and $q^*$ be the optimal values of (\ref{op1}) and (\ref{op3}), respectively. As is known, the weak duality $q^*\leq h^*$ is always true, and the strong duality $q^*=h^*$ holds if a constraint qualification, such as Slater's condition, holds \cite{bertsekas2003convex,boyd2004convex,ruszczynski2006nonlinear}.

\subsection{Problem Formulation}\label{s2.3}

This section formulates the distributed online optimization problem. In this problem, there exist a sequence of time-varying global cost functions $\{f_t(x)\}_{t=0}^{\infty}$ which are not known in advance and only revealed gradually over time. At each time step $t$, the global cost function $f_t$ is composed of a group of local cost functions over a network with $N$ agents, i.e.,
\begin{align}
f_t(x)=\sum_{i=1}^N f_{i,t}(x_i),            \label{7a}
\end{align}
where $x:=col(x_1,\ldots,x_N)$ with $x_i\in X_i\subset\mathbb{R}^{n_i}$, and $f_{i,t}:\mathbb{R}^{n_i}\to\mathbb{R}$. After agent $i\in [N]$ makes a decision at time $t$, say $x_{i,t}$, the cost function $f_{i,t}$ is only revealed to agent $i$ and a cost $f_{i,t}(x_{i,t})$ is incurred. That is, each agent only gradually accesses the information of $f_{i,t}$ along with an incurred cost. In the meantime, there also exist a collection of functions $g_{i}:\mathbb{R}^{n_i}\to\mathbb{R}^m,i\in [N]$ which impose global and coupled inequality constraints for the online optimization problem, that is, at each time step $t$ it should satisfy
\begin{align}
g(x):=\sum_{i=1}^N g_{i}(x_i)\leq \textbf{0},           \label{7b}
\end{align}
where $g_{i}$ is only known to agent $i$ for each $i\in[N]$. For brevity, let $X=\times_{i=1}^N X_i$ be the Cartesian product of $X_i$'s, and define
\begin{align}
\mathcal{X}:=\{x\in X: g(x)\leq \textbf{0}\},        \label{8}
\end{align}
which is assumed nonempty.

The goal of the distributed online optimization is to reduce the total incurred cost over a finite time horizon $T>0$. Specifically, the aim is to design an algorithm such that
\begin{align}
Reg(T):=\sum_{t=1}^T\sum_{i=1}^N f_{i,t}(x_{i,t})-\sum_{t=1}^T\sum_{i=1}^N f_{i,t}(x_{i,t}^*)           \label{9}
\end{align}
is minimized, where (\ref{9}) is called the {\em dynamic regret} for measuring the performance of a designed algorithm, where $x_{i,t}^*$ is the $i$-th component of $x_t^*=col(x_1^*,\ldots,x_N^*)$ and
\begin{align}
x_t^*:=\mathop{\arg\min}_{x\in\mathcal{X}}\sum_{i=1}^N f_{i,t}(x_i),           \label{10}
\end{align}
that is, $x_t^*$ is the optimal decision vector at time step $t$. It is worth mentioning that another metric, called {\em static regret}, is defined by (\ref{9}) with $x_{i,t}^*$ being replaced with $x_i^*$, where $x^*=col(x_1^*,\ldots,x_N^*):=\mathop{\arg\min}_{x\in\mathcal{X}}\sum_{t=1}^T\sum_{i=1}^N f_{i,t}(x_i)$. That is, $x^*$ is the best decision vector by having the full knowledge of $f_{i,t},i\in[N],t\in[T]$ as an a priori and without any communication restrictions among agents. It is easy to observe that the static regret is not greater than the dynamic regret. Moreover, the dynamic regret makes more sense than the static one in many applications, such as tracking moving targets, where the variable of interest evolves over time and thus it is not sufficient to compare with a static benchmark. Note that the dynamic and static regrets will be identical when $f_{i,t}$'s are all independent of time $t$.

Generally speaking, a proposed algorithm is announced ``good'' if the regret is sublinear with respect to $T$, i.e., $Reg(T)=o(T)$, where $o(T)$ means that $\lim_{T\to\infty}o(T)/T=0$. Intuitively, the sublinearity of the regret guarantees that the average value of the global cost function over time horizon $T$ achieves the optimal value as $T$ goes to infinity.

Moreover, as the distributed online optimization involves coupled inequality constraints (\ref{7b}), it is indispensable for the designed algorithm to eventually respect this kind of constraints. That is, the following constraint violation
\begin{align}
Reg^c(T):=\Big\|\Big[\sum_{t=1}^T\sum_{i=1}^N g_{i}(x_{i,t})\Big]_+\Big\|           \label{11}
\end{align}
should grow more slowly than $T$. Mathematically, it should be ensured by the designed algorithm that $Reg^c(T)$ is also sublinear with respect to $T$, i.e., $Reg^c(T)=o(T)$.

To end this section, some necessary assumptions on the online optimization problem are listed as follows.

\begin{assumption}\label{a2}
\begin{enumerate}
  \item The functions $f_{i,t}$ and $g_{i}$ are convex on $\mathbb{R}^{n_i}$ for all $i\in[N]$ and $t\geq 0$.
  \item All the sets $X_i,i\in[N]$ are convex and compact.
  \item There exists a point $x_s\in relint(X)$ such that $g(x_s)\leq {\bf 0}$ for those components of $g(x)$ that are linear in $x$, if any, while $g(x_s)<{\bf 0}$ for all other components, where $relint(X)$ means the relative interior of $X$.
  \item Each $f_{i,t}$ and its subgradient are uniformly bounded, i.e., there exist $B_f,C_f>0$ such that $\forall t\geq 0,\forall i\in[N]$,
\begin{align}
|f_{i,t}(x)|&\leq B_f,~~~~~~~~~~~~\forall x\in X_i,               \label{13}\\
|f_{i,t}(x)-f_{i,t}(y)|&\leq C_f\|x-y\|,~~~\forall x,y\in X_i,                         \label{15}\\
\|\partial f_{i,t}(x)\|&\leq C_f,~~~~~~~~~~~~~\forall x\in X_i.                       \label{17a}
\end{align}
\end{enumerate}
\end{assumption}

The first assumption above does not require each function to be differentiable. The third assumption is the standard Slater's condition for the existence of saddle points in convex optimization problems \cite{ruszczynski2006nonlinear}. The second assumption has been widely employed in distributed online optimization \cite{nedic2015decentralized,lee2017stochastic,lee2017sublinear}, mostly due to the fact that decision variables are usually bounded in practice, such as the charging rate for the problem in Section \ref{s5}. The compactness of all $X_i$'s can result in that there exist positive constants $B_x$ and $B_g$ such that
\begin{align}
\|x\|&\leq B_x,~~~~~\forall x\in X_i,                                                            \label{12}\\
\|g_{i}(x)\|&\leq B_g,~~~~~\forall x\in X_i, \forall i\in[N].                    \label{14}
\end{align}
Furthermore, in light of the facts that $f_{i,t},g_{i}$ are convex and $X_i$'s are compact, it can be concluded that there exists $C_g>0$ such that for any $x,y\in X_i$ and $i\in[N],t\geq 0$,
\begin{align}
\|g_{i}(x)-g_{i}(y)\|&\leq C_g\|x-y\|,                        \label{16}\\
\|\partial g_{i}(x)\|&\leq C_g.                        \label{17b}
\end{align}

\section{Main Results}\label{s3}

This section presents the main results of this paper, including the algorithm design and the bounds on its regret and constraint violation. To start with, the Lagrangian function $L_t:\mathbb{R}^n\times \mathbb{R}_+^m\to\mathbb{R}$ of the online optimization problem at time instant $t$ is defined as
\begin{align}
L_t(x,\mu)&=\sum_{i=1}^N f_{i,t}(x_i)+\mu^\top\sum_{i=1}^N g_{i}(x_i),         \label{18}
\end{align}
where $n$ is the dimension of $x\in X$, i.e., $n:=\sum_{i=1}^N n_i$, and $\mu\geq \textbf{0}$ is the dual variable or Lagrange multiplier of this problem. By defining
\begin{align}
L_{i,t}(x_i,\mu):=f_{i,t}(x_i)+\mu^\top g_{i}(x_i),                           \label{19}
\end{align}
it is easy to see that $L_t(x,\mu)=\sum_{i=1}^N L_{i,t}(x_i,\mu)$.

For the centralized online optimization where only one centralized agent exists in the network and attempts to solve the optimization problem, a well-known algorithm is the so-called Arrow-Hurwicz-Uzawa saddle point algorithm or primal-dual algorithm \cite{arrow1958studies} by leveraging subgradients of primal and dual variables of the Lagrangian function $L_t$, explicitly given as
\begin{align}
x_{t+1}&=P_X(x_t-\alpha_t s_{x,t}),                         \nonumber\\
\mu_{t+1}&=[\mu_t+\alpha_t \nabla_\mu L_t(x_t,\mu_t)]_+,        \label{20}
\end{align}
where $\alpha_t$ is the stepsize, $\nabla_\mu L_t(x_t,\mu_t)=\sum_{i=1}^N g_{i}(x_{i,t})$, and $s_{x,t}$ is a subgradient of $L_t$ with respect to $x$ at $(x_t,\mu_t)$, i.e.,
\begin{align}
s_{x,t}\in \partial_x\Big(\sum_{i=1}^N f_{i,t}(x_{i,t})\Big)+\partial_x\Big(\sum_{i=1}^N g_{i}(x_{i,t})\Big)\mu_t.        \label{22}
\end{align}

However, in the scenario of distributed online optimization, no centralized agent can access the full knowledge of $f_t(x)$ and $g(x)$, which are only gradually revealed to each individual agent in the network. Hence, algorithm (\ref{20}) is not applicable directly since each agent does not have an identical $\mu_t$ and does not know $\nabla_\mu L_t(x_t,\mu_t)$ at time $t$. As such, the authors in \cite{lee2017sublinear} proposed a modified algorithm based on (\ref{20}), i.e.,
\begin{align}
x_{i,t+1}&=P_{X_i}(x_{i,t}-\alpha_t s_{i,t}'),            \nonumber\\
\mu_{i,t+1}&=\Big[\sum_{j=1}^N a_{ij,t}\mu_{j,t}+\alpha_t \sum_{j=1}^N a_{ij,t}y_{j,t}\Big]_+,        \nonumber\\
y_{i,t+1}&=\sum_{j=1}^N a_{ij,t}y_{j,t}+g_{i}(x_{i,t+1})-g_{i}(x_{i,t}),                 \label{23}
\end{align}
where $s_{i,t}'\in \partial f_{i,t}(x_{i,t})+\partial g_{i}(x_{i,t})\sum_{j=1}^N a_{ij,t}\mu_{j,t}$, and $y_{i,t}$ is an auxiliary variable of agent $i$ for tracking the function $\sum_{i=1}^N g_{i}(x_{i,t})/N$. It is shown that algorithm (\ref{23}) can ensure the sublinearity of both the regret and constraint violation. Nevertheless, (\ref{23}) builds upon an assumption that $\mu_{i,t}$'s are bounded for all $i\in[N]$ and $t\geq 1$, which limits its applicability since $\mu_{i,t}$ is generated by the algorithm (\ref{23}) and their boundedness should be theoretically established rather than by an assumption. On the other hand, algorithm (\ref{23}) is designed for balanced communication graphs among agents, yet not applicable for unbalanced interaction graphs which are more general and practical in applications. Note that under an unbalanced graph, such as $A_t=(a_{ij,t})$ is column-stochastic (but not row-stochastic), the multipliers $\mu_{i,t}$'s and thus $\sum_{j=1}^N a_{ij,t}\mu_{j,t}$ in $s_{i,t}'$ will eventually achieve different vector values, meaning that $\sum_{j=1}^N a_{ij,t}\mu_{j,t}$, as multipliers in the Lagrangian function, cannot reach an identical multiplier as they should. In this case, even when $f_{i,t}$'s are time-invariant, algorithm (\ref{23}) cannot converge to the optimizer set. Please refer to \cite{nedic2016stochastic,xie2018distributed} for more details.

As pointed out above, two challenges appear in this paper when handling problem (\ref{7a})-(\ref{7b}): one is to consider unbalanced communication graphs, as shown in Assumption \ref{a1}.2 for $A_t$, and the other is to eliminate the assumption on the boundedness of $\mu_{i,t}$ for all $i\in[N]$ and $t\geq 1$. To address the two issues, two strategies are respectively introduced in the sequel.

Firstly, to deal with unbalanced communication graphs, there are generally four methods which are respectively the push-sum method \cite{kempe2003gossip,benezit2010weighted,dominguez2011distributed,tsianos2012push,nedic2015distributed,nedic2016stochastic,xi2017dextra}, the ``surplus''-based method \cite{xi2017distributed}, the row-stochastic matrix method \cite{xi2018linear}, and the epigraph method \cite{xie2018distributed}. Among them, the push-sum approach, originally devised for average consensus problems over unbalanced graphs \cite{kempe2003gossip,benezit2010weighted,dominguez2011distributed}, is most popular. For the other three methods, there are some shortcomings. Specifically, the ``surplus''-based idea used in \cite{xi2017distributed} is required to access global information since a parameter in the algorithm depends on communication weight matrices, while some network-size variables are introduced for each agent in \cite{xi2018linear,xie2018distributed} which will incur extremely high computational complexity especially for large-scale networks. Based on the aforementioned discussion, in this paper we adopt the push-sum approach to handle the imbalance of the communication graph among agents. Note that it is reasonable for each agent in the push-sum method to know its own out-degree \cite{hendrickx2015fundamental}. Actually, as pointed out in \cite{nedic2016stochastic}, the information on the out-degree for each individual agent can be known by virtue of bidirectional exchange of ``hello'' messages during only a single round of communication. Specifically, in view of the push-sum idea, algorithm (\ref{23}) is redesigned as
\begin{align}
w_{i,t+1}&=\sum_{j=1}^N a_{ij,t} w_{j,t},              \nonumber\\
\hat{\mu}_{i,t}&=\sum_{j=1}^N a_{ij,t}\mu_{j,t},~~~\hat{y}_{i,t}=\sum_{j=1}^N a_{ij,t} y_{j,t},           \nonumber\\
x_{i,t+1}&=P_{X_i}(x_{i,t}-\alpha_t s_{i,t+1}),            \nonumber\\
\mu_{i,t+1}&=\Big[\hat{\mu}_{i,t}+\alpha_t \frac{\hat{y}_{i,t}}{w_{i,t+1}}\Big]_+,        \nonumber\\
y_{i,t+1}&=\hat{y}_{i,t}+g_{i}(x_{i,t+1})-g_{i}(x_{i,t}),                 \label{24}
\end{align}
where $s_{i,t+1}\in \partial f_{i,t}(x_{i,t})+\partial g_{i}(x_{i,t})\hat{\mu}_{i,t}/w_{i,t+1}$, and $w_{i,t}\in\mathbb{R}$ is a variable, aiming to remove the imbalance of the communication graph by, roughly speaking, tracking the right-hand eigenvector of $A_t$ associated with the eigenvalue $\textbf{1}$.

\begin{algorithm}
 \caption{Distributed Online Primal-dual Push-sum (DOPP)}
 \begin{algorithmic}[1]
 \renewcommand{\algorithmicrequire}{\textbf{Require:}}
 \renewcommand{\algorithmicensure}{\textbf{Output:}}
 \REQUIRE  Set $T\geq 4$. Locally initialize $w_{i,0}=1$, $x_{i,0}\in X_i$, $\mu_{i,0}=0$ and $y_{i,0}=g_{i}(x_{i,0})$ for all $i\in[N]$.
% \ENSURE  out
% \\ \textit{Initialization} :
  \STATE If $t=T$, then stop. Otherwise, update for each $i\in[N]$:
\begin{align}
w_{i,t+1}&=\sum_{j=1}^N a_{ij,t} w_{j,t},              \label{ag1}\\
\hat{\mu}_{i,t}&=\sum_{j=1}^N a_{ij,t}\mu_{j,t},~~~~~\hat{y}_{i,t}=\sum_{j=1}^N a_{ij,t} y_{j,t},           \label{ag2}\\
s_{i,t+1}&\in \partial f_{i,t}(x_{i,t})+\partial g_{i}(x_{i,t})\frac{\hat{\mu}_{i,t}}{w_{i,t+1}},      \label{25}\\
x_{i,t+1}&=P_{X_i}(x_{i,t}-\alpha_t s_{i,t+1}),            \label{ag3}\\
\mu_{i,t+1}&=\Big[\hat{\mu}_{i,t}+\alpha_t \Big(\frac{\hat{y}_{i,t}}{w_{i,t+1}}-\beta_t\hat{\mu}_{i,t}\Big)\Big]_+,        \label{ag4}\\
y_{i,t+1}&=\hat{y}_{i,t}+g_{i}(x_{i,t+1})-g_{i}(x_{i,t}),                 \label{ag5}
\end{align}
  \STATE Increase $t$ by one and go to Step 1.
 \end{algorithmic}
\end{algorithm}

Secondly, there is no guarantee on the boundedness of $\mu_{i,t}$ in algorithm (\ref{24}), as in algorithm (\ref{23}). To hinder the increase of a parameter, a quintessential method is to append some penalty function or term \cite{yuan2017adaptive,bertsekas2003convex,boyd2004convex,ruszczynski2006nonlinear}, inspired by which an additional penalty term is designed and incorporated into the update of $\mu_{i,t+1}$ in order to impede the growth of $\mu_{i,t}$, that is,
\begin{align}
\mu_{i,t+1}=\Big[\hat{\mu}_{i,t}+\alpha_t \Big(\frac{\hat{y}_{i,t}}{w_{i,t+1}}-\beta_t\hat{\mu}_{i,t}\Big)\Big]_+,                 \label{26}
\end{align}
where $\beta_t$ is a stepsize to be determined. Note that there is another method to handle the boundedness of $\mu_{i,t}$, that is, performing projections on some bounded set $M_i$ for agent $i$, instead of on $\mathbb{R}_+^m$, when updating $\mu_{i,t}$ at each time slot, as done in \cite{chang2014distributed,mateos2017distributed,li2019distributed4,zhu2012distributed}. However, the computation of the set $M_i$ is usually difficult and computationally expensive for distributed online optimization.

The proposed algorithm in this paper is summarized in Algorithm 1.

With the above preparations, it is now ready to present the main results of this paper.

\begin{theorem}\label{t1}
Under Assumptions \ref{a1} and \ref{a2}, and let $\alpha_0=1,\beta_0=1$, and for $t\geq 1$,
\begin{align}
\alpha_t=\frac{1}{\sqrt{t}},~~~~~\beta_{t}=\frac{1}{t^{\kappa}},        \label{28}
\end{align}
where $\kappa$ is a constant satisfying $\kappa\in(0,1/4)$, then the dynamic regret (\ref{9}) and constraint violation (\ref{11}) can be bounded as
\begin{align}
Reg(T)&=O_+(T^{\frac{1}{2}+2\kappa})+O_+(V_T),         \label{29}\\
Reg^c(T)&=O(T^{1-\frac{\kappa}{2}}),          \label{30}
\end{align}
where $V_T$ represents the $1/\alpha_t$-weighted path variation of the optimal decision vectors $x_{i,t}^*$'s, defined by
\begin{align}
V_T:=\sum_{t=1}^T\frac{1}{\alpha_t}\sum_{i=1}^N\|x_{i,t+1}^*-x_{i,t}^*\|.         \label{VT}
\end{align}
Moreover, in the worst case when $x_{t}:=col(x_{1,t},\ldots,x_{N,t})$ is always infeasible, i.e., $\sum_{i=1}^N g_i(x_{i,t})>\textbf{0}$ for all $t\in[T]$, then
\begin{align}
Reg(T)=\Omega(-T^{1-\frac{\kappa}{2}}).         \label{29b}
\end{align}
\end{theorem}
\begin{proof}
The proof can be found in Appendix B.
\end{proof}

\begin{remark}\label{r2}
It can be found from Theorem \ref{t1} that $Reg(T)$ has an upper bound which is close to $O(T^{1/2})$ when $\kappa$ is sufficiently small, and meanwhile $Reg^c(T)$ will reach a good upper bound when $\kappa$ is large enough. As a result, there should be a tradeoff for choosing $\kappa$ such that both $Reg(T)$ and $Reg^c(T)$ get good upper bounds. Simultaneously, the upper bound on $Reg(T)$ also depends on $V_T$, indicating that $Reg(T)$ is sublinear if $V_T$ is sublinear, which is reasonable for the dynamic regret analysis because it is impossible to track the optimal decision vectors $\{x_t^*\}_{t=0}^\infty$ when $\{x_t^*\}_{t=0}^\infty$ change dramatically (a widely known phenomenon in online optimization \cite{yang2016tracking,cao2018online}). It should be noted that $V_T=0$ if the static regret is studied. In comparison with \cite{lee2017sublinear}, where the same problem as (\ref{7a})-(\ref{7b}) has been studied, the sublinearity of $Reg(T)$ and $Reg^c(T)$ in Theorem \ref{t1} is obtained under less conservative assumptions, that is, no assumptions on boundedness of $\mu_{i,t}$ are employed here while it is utilized in \cite{lee2017sublinear}. In addition, the static regret and balanced communication graphs are considered in \cite{lee2017sublinear}, while the dynamic regret and more general unbalanced interaction graphs are taken into account here.
\end{remark}

\begin{remark}\label{rrr1}
More specifically, as seen from Appendix B, (\ref{29}), (\ref{30}) and (\ref{29b}) can be respectively established as $Reg(T)=O_+(\zeta T^{\frac{1}{2}+2\kappa})+O_+(V_T)$, $Reg^c(T)=O(N\sqrt{N\zeta}T^{1-\frac{\kappa}{2}})$, $Reg(T)=\Omega(-N\sqrt{N\zeta}T^{1-\frac{\kappa}{2}})$, where $\zeta:=\zeta_0+\max\{N^7/[a^2r^6(1-\lambda)^2],N^6/[r^7(1-\lambda)^3]\}$ with $r\geq 1/N^{NQ}$ and $\lambda\leq (1-1/N^{NQ})^{1/(NQ)}$ as defined in Lemma \ref{l1}, and $\zeta_0>0$ is a constant independent of $a,r,\lambda,Q$. It is noteworthy that $\zeta$ in Theorem \ref{t1} is proportional to $Q$, and thus the regret bound will increase as $Q$ grows. Moreover, in the case that $\mu_{i,t}$'s are assumed to be bounded as in \cite{lee2017sublinear}, the improved bound $O(\sqrt{T})$, which is the same as in \cite{lee2017sublinear}, can be established for the static regret and constraint violation for DOPP here under unbalanced graphs.
\end{remark}

\begin{remark}\label{rrr2}
Note that true subgradients have been leveraged in Theorem \ref{t1}. When only noisy subgradients are available, i.e., $\tilde{\partial}f_{i,t}(x_{i,t})=\partial f_{i,t}(x_{i,t})+\epsilon_{i,t}^f$ and $\tilde{\partial}g_{i}(x_{i,t})=\partial g_{i}(x_{i,t})+\epsilon_{i,t}^g$ with $\epsilon_{i,t}^f, \epsilon_{i,t}^g$ being i.i.d. unbiased noises and having bounded variances, the same bound for $\mathbb{E}(Reg(T))$ and $\mathbb{E}(Reg^c(T))$ as in Theorem \ref{t1} can be established using similar arguments, where $\mathbb{E}(\cdot)$ denotes the mathematical expectation.
\end{remark}

As discussed in Remark \ref{r2}, the parameter $\kappa$ can be specified for the same upper bound for $Reg(T)$ and $Reg^c(T)$ as follows.

\begin{corollary}\label{c1}
In Theorem \ref{t1}, let $\kappa=1/5$, then the regret (\ref{9}) and constraint violation (\ref{11}) can be upper bounded as
\begin{align}
Reg(T)&=O_+(T^{\frac{9}{10}})+O_+(V_T),         \label{32}\\
Reg^c(T)&=O(T^{\frac{9}{10}}).       \label{33}
\end{align}
\end{corollary}

\begin{proof}
To achieve the same upper bound for $Reg(T)$ and $Reg^c(T)$, it amounts to that $\frac{1}{2}+2\kappa=1-\frac{\kappa}{2}$, thus leading to $\kappa=1/5$, which directly implies (\ref{32}) and (\ref{33}).
\end{proof}

\begin{remark}\label{rrr3}
Note that distributed algorithms are considered here to achieve sublinearity for the dynamic regret under unbalanced graphs. In comparison, the static regret is studied in most of existing works without inequality constraints, and the dynamic regret was recently investigated in \cite{yi2019distributed} with time-varying coupled inequality constraints, where the bounds $O(\sqrt{T})+O(\sqrt{T}\sum_{t=1}^{T}\|x_{i,t+1}^*-x_{i,t}^*\|)$ and $O(\sqrt{T})$ are obtained for the dynamic regret and constraint violation, respectively. However, the results only applies to balanced graphs and the bound $\sqrt{T}\sum_{t=1}^{T}\|x_{i,t+1}^*-x_{i,t}^*\|$ is weaker than $V_T$ here, i.e., $V_T\leq \sqrt{T}\sum_{t=1}^{T}\|x_{i,t+1}^*-x_{i,t}^*\|$. Additionally, for general convex functions, it is known that the optimal bound for dynamic regret is $O(\sqrt{T})+O(\sqrt{T^{1-\tau}}\sqrt{\sum_{t=1}^{T-1}t^\tau\|x_{i,t+1}^*-x_{i,t}^*\|})$ for any pre-defined $\tau\in[0,1)$ in centralized online optimization without inequality constraints \cite{zhao2018proximal}. In this respect, how to establish the optimal bounds for the dynamic regret and constraint violation in our setting as in the centralized case is one potential research direction.
\end{remark}

As a special case of problem (\ref{7a})-(\ref{7b}), the time-invariant online optimization problem, that is, $f_{i,t}(x)$'s are independent of time $t$ for all $i\in[N]$ and are simply denoted by $f_i(x)$, can enjoy a better result, as shown below.

\begin{theorem}\label{t2}
For the time-invariant online optimization problem, if Assumptions \ref{a1} and \ref{a2} hold, and let $\alpha_0=1,\beta_0=1$, and for $t\geq 1$, $\alpha_t,\beta_t$ are given as in (\ref{28}), then
\begin{align}
&\sum_{i=1}^Nf_i(\bar{x}_{i,T})-\sum_{i=1}^N f_i(x_i^*)=O_+(T^{-\frac{1}{2}+2\kappa}),                                   \label{35}\\
&E_T^f=\left\{
                                                           \begin{array}{ll}
                                                             \Omega(-T^{-\kappa}), & \kappa\in(0,\frac{1}{6}] \\
                                                             \Omega(-T^{-\frac{1}{4}+\frac{\kappa}{2}}), & \kappa\in[\frac{1}{6},\frac{1}{4})
                                                           \end{array}
                                                         \right.             \label{35b}\\
&\Big\|\Big[\sum_{i=1}^N g_i(\bar{x}_{i,T})\Big]_+\Big\|=\left\{
                                                           \begin{array}{ll}
                                                             O(T^{-\kappa}), & \kappa\in(0,\frac{1}{6}] \\
                                                             O(T^{-\frac{1}{4}+\frac{\kappa}{2}}), & \kappa\in[\frac{1}{6},\frac{1}{4})
                                                           \end{array}
                                                         \right.                      \label{36}
\end{align}
where $E_T^f:=\frac{1}{T}\sum_{t=1}^T\sum_{i=1}^Nf_i(x_{i,t})-\sum_{i=1}^N f_i(x_i^*)$, and
\begin{align}
&\hspace{0.5cm}\bar{x}_{i,T}:=\frac{1}{T}\sum_{t=1}^T x_{i,t},~~~~~\forall i\in[N]             \label{95a}\\
&x^*=col(x_1^*,\ldots,x_N^*):=\mathop{\arg\min}_{x\in\mathcal{X}} \sum_{i=1}^N f_i(x_i).          \label{95b}
\end{align}
\end{theorem}
\begin{proof}
The proof can be found in Appendix C.
\end{proof}

\begin{corollary}\label{c2}
In Theorem \ref{t2}, let $\kappa=1/6$, then
\begin{align}
\sum_{i=1}^Nf_i(\bar{x}_{i,T})-\sum_{i=1}^N f_i(x_i^*)&=O_+(T^{-\frac{1}{6}}),                                   \label{c35}\\
\frac{1}{T}\sum_{t=1}^T\sum_{i=1}^Nf_i(x_{i,t})-\sum_{i=1}^N f_i(x_i^*)&=\Omega(-T^{-\frac{1}{6}}),             \label{c35b}\\
\Big\|\Big[\sum_{i=1}^N g_i(\bar{x}_{i,T})\Big]_+\Big\|&=O(T^{-\frac{1}{6}}).                                  \label{c36}
\end{align}
\end{corollary}

\begin{remark}\label{r3}
It should be noted that the distributed optimization problem studied in Theorem \ref{t2} has also been addressed in \cite{chang2014distributed,mateos2017distributed,falsone2017dual,notarnicola2019constraint,notarnicola2017duality,li2019distributed4}, where weight-balanced graphs are considered and the convergence rate $O(1/\sqrt{T})$ is provided in \cite{mateos2017distributed,li2019distributed4} in terms of the Lagrangian function. In comparison, the results in Theorem \ref{t2} are under unbalanced graphs, and establish the convergence speed for the cost function and constraint functions separately, as shown in (\ref{35})-(\ref{36}), although a slower rate $O(1/T^{1/6})$ is established here, as seen in Corollary \ref{c2}.
\end{remark}

\section{A Simulation Example}\label{s5}

This section applies Algorithm 1 to the Plug-in Electric Vehicles (PEVs) charging problem \cite{vujanic2016decomposition,falsone2017dual} in order to corroborate the algorithm's efficiency. The purpose of this PEVs charging problem is to seek an optimal overnight charging schedule for a collection of vehicles subject to some practical constraints, such as the limited charging rate for each vehicle and the overall maximal power that can be delivered by the whole network, etc.

As done in \cite{falsone2017dual}, a slightly modified ``only charging'' problem in \cite{vujanic2016decomposition} is taken into account here. That is, the charging rate of each vehicle is permitted to be optimized at each time step, rather than making a decision on whether or not to charge the vehicle at some fixed charging rate. Formally, the charging problem at time slot $t$ can be cast as $f_{i,t}(x_i)=c_{i,t}^\top x_i$ in (\ref{7a}) and $g_{i}(x_i)=D_ix_i-b/N$ in (\ref{7b}) with $x_i\in X_i\subset\mathbb{R}^{n_i}$ being a local feasible set constraint for each $i\in[N]$, where $X_i$ is usually a compact convex polygon in the charging problem. In this problem, the variable $x_i$ stands for the charging rate in a specified time duration, and $c_{i,t}$ represents the unitary charging cost (bounded) of vehicle $i$ at time instant $t$, randomly chosen in $[0,10]$ in the simulation. Also, $\sum_{i=1}^N (D_ix_i-b/N)\leq \textbf{0}$ is the coupled inequality constraint, representing the whole networked power constraint, where $D_i\in\mathbb{R}^{m\times n_i}$ is the rate aggregation matrix for each $i\in[N]$ and $b\in\mathbb{R}^m$ is the limit on the global aggregate charging power flow. Please refer to \cite{vujanic2016decomposition} for more details on the PEVs charging problem.

\begin{figure}[H]
\centering
\subfigure[]{\includegraphics[width=1.5in]{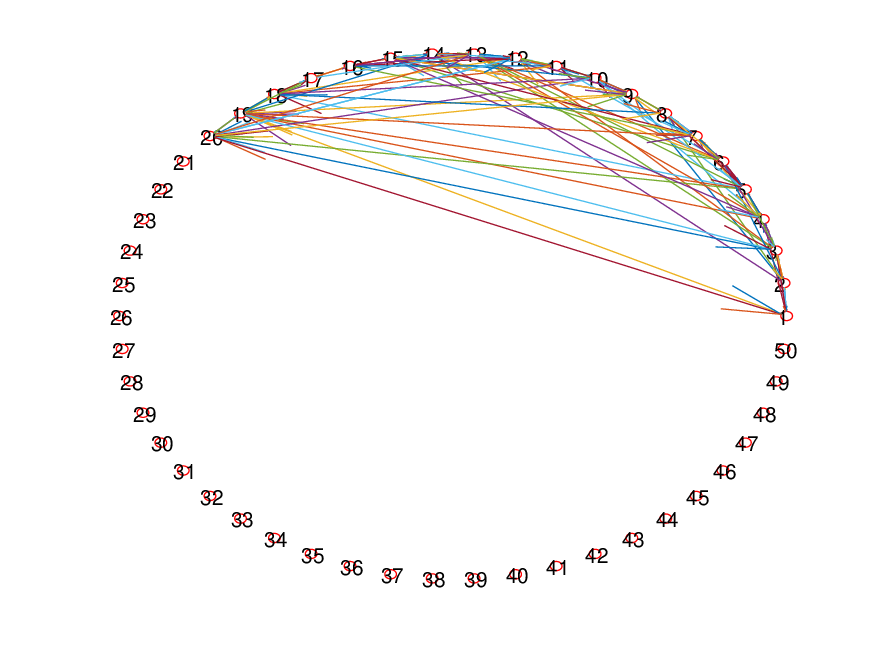}}
\subfigure[]{\includegraphics[width=1.5in]{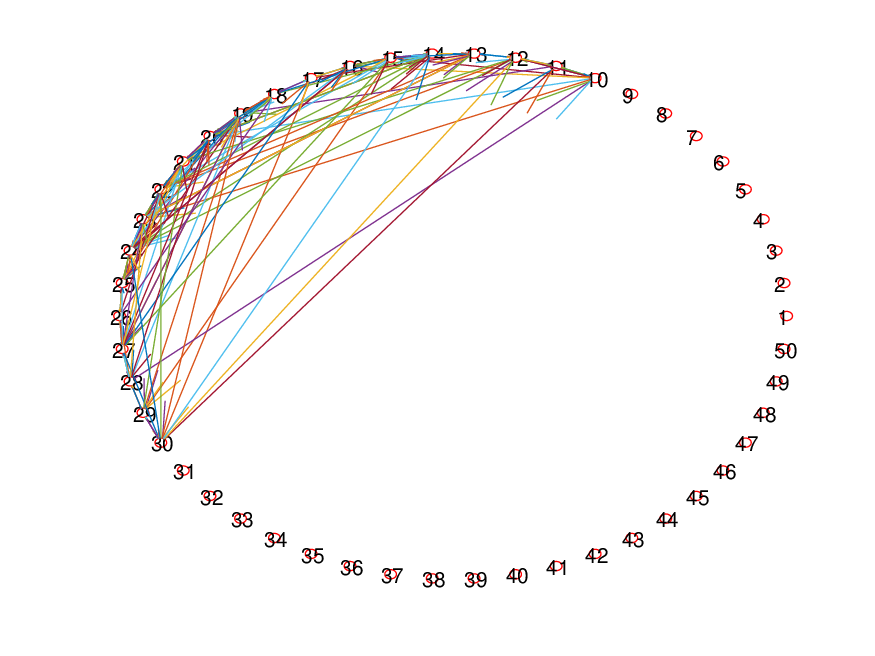}}

\subfigure[]{\includegraphics[width=1.5in]{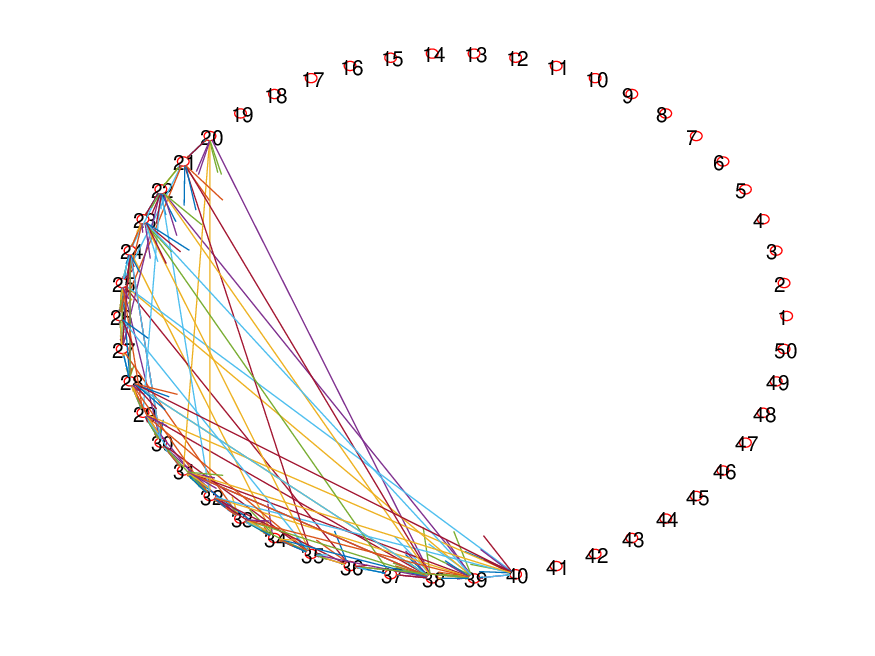}}
\subfigure[]{\includegraphics[width=1.5in]{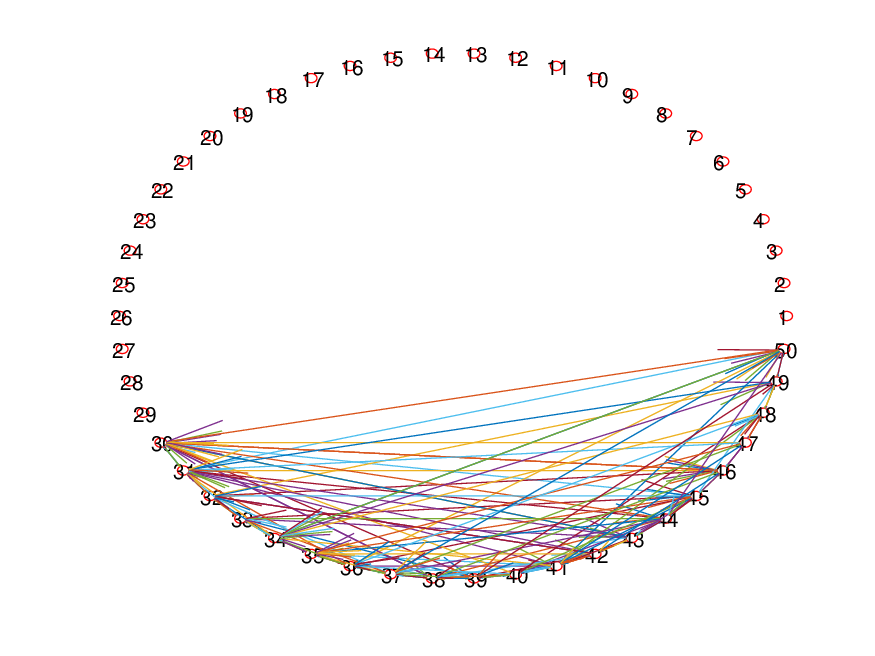}}
\caption{Schematic illustration of $4$ switching graphs.}
\label{f1}
\end{figure}

\begin{figure}[H]
\centering
\includegraphics[width=2.9in]{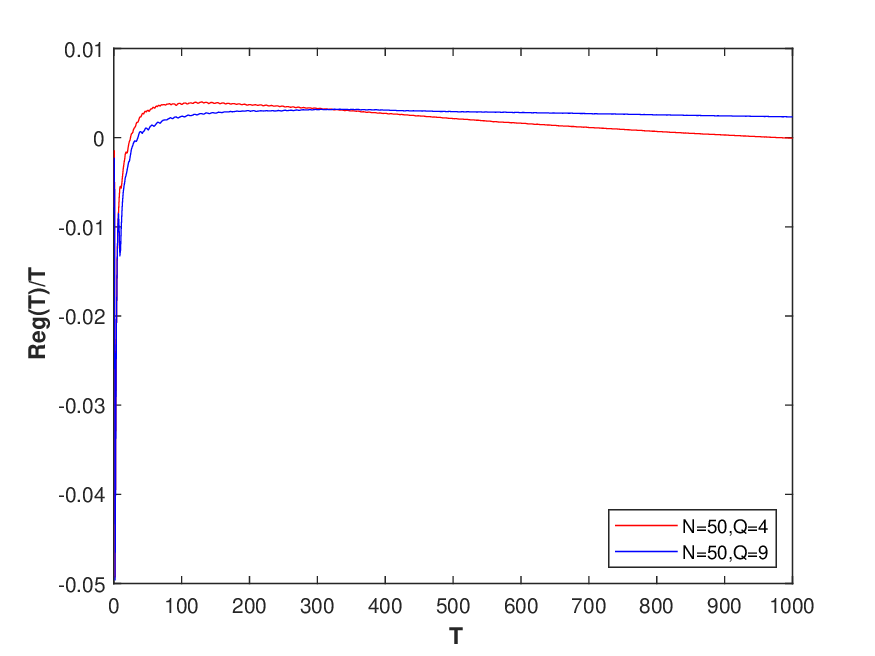}
\caption{Evolutions of $Reg(T)/T$ with $Q=4$ and $Q=9$ for $N=50$.}
\label{f2}
\end{figure}

For the charging problem, it is easy to verify that Assumption \ref{a2} holds based on the above facts. As given in \cite{vujanic2016decomposition,falsone2017dual}, the dimension of $x_i$ for each individual agent is $n_i=24$, each local feasible set $X_i$ is confined by $197$ inequalities, and the number of inequality constraints is $m=48$. In this setup, let $\kappa=0.2$, and different switching graphs are considered in this simulation along with the distinct number of agents. Specifically, Figs. \ref{f2} and \ref{f3} show the evolutions of $Reg(T)/T$ and $Reg^c(T)/T$ for a group of $N=50$ vehicles when $Q=4$ and $Q=9$, respectively, in which the trajectories are tending to the origin, supporting Algorithm 1. Note that $Q$ is given in Assumption \ref{a1} for communication graphs, and for instance, four switching graphs in Fig. \ref{f1} are employed here when $Q=4$. It is worthwhile to notice that the value of $Reg(T)/T$ in Fig. \ref{f2} can be negative, which is reasonable because the inequality constraints are not always respected by $x_{i,t}$. In addition, Figs. \ref{f5} and \ref{f6} give the trajectories of $Reg(T)/T$ and $Reg^c(T)/T$ for a fixed communication graph, i.e., $Q=1$, when $N=50$ and $N=100$, respectively, indicating the convergence of Algorithm 1 in this scenario. Besides, observing Figs. \ref{f4} and \ref{f7}, one can find that all $\tilde{\mu}_{i,t}$'s can achieve consensus asymptotically in these simulations.

\begin{figure}[H]
\centering
\includegraphics[width=2.9in]{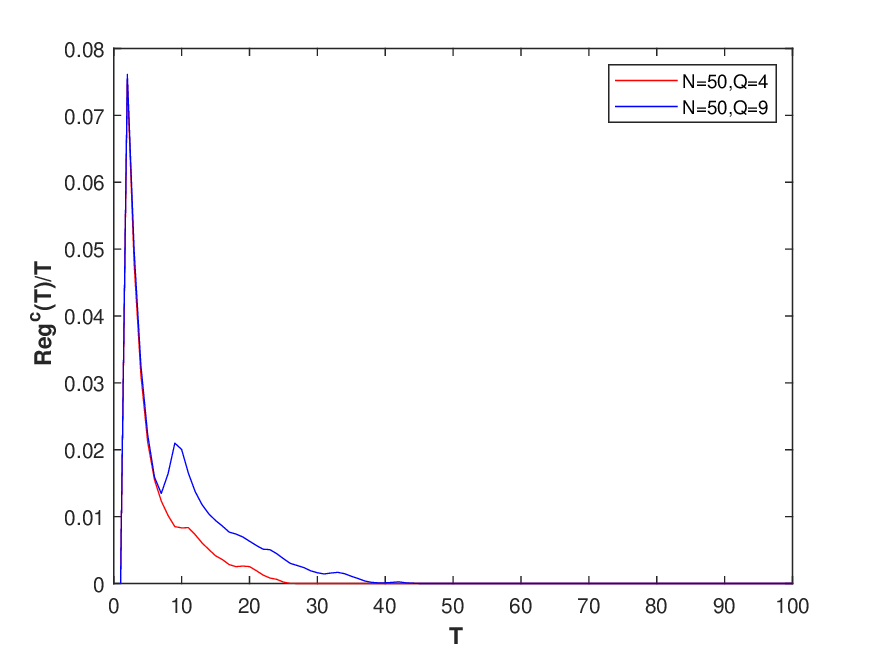}
\caption{Evolutions of $Reg^c(T)/T$ with $Q=4$ and $Q=9$ for $N=50$.}
\label{f3}
\end{figure}

\begin{figure}[H]
\centering
\includegraphics[width=2.9in]{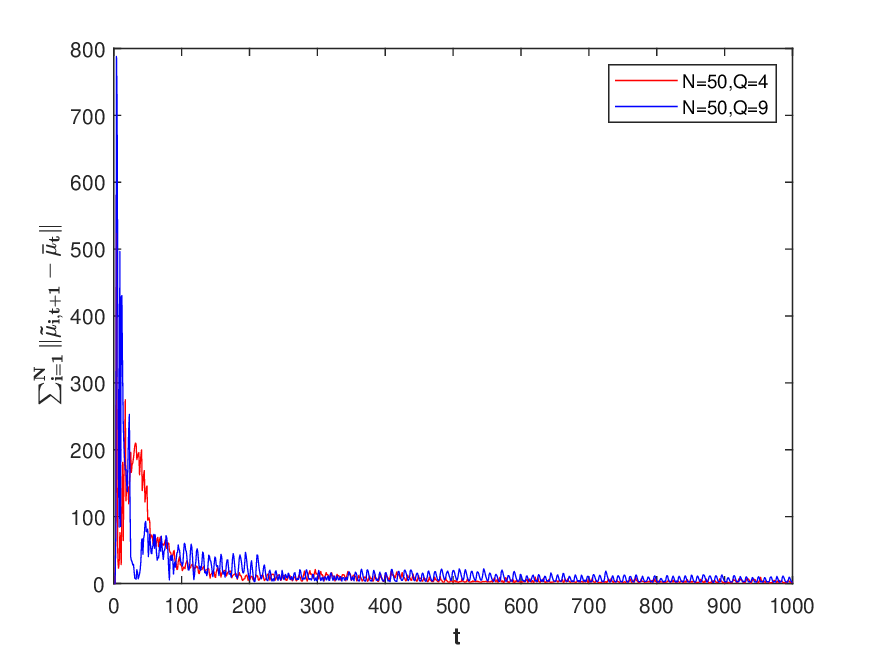}
\caption{Evolution of $\sum_{i=1}^N\|\tilde{\mu}_{i,t+1}-\bar{\mu}_t\|$ over $N$ agents.}
\label{f4}
\end{figure}

\section{Conclusion}\label{s6}

This paper has investigated distributed online convex optimization problems over directed multi-agent networks subject to local set constraints and coupled inequality constraints. It is noted that the same problem has been studied in \cite{lee2017sublinear} along with the design of an online primal-dual algorithm. However, the results in \cite{lee2017sublinear} depend on the boundedness of Lagrange multipliers generated by the proposed algorithm, which limits its applicability. To tackle this problem, a modified distributed online primal-dual push-sum algorithm (DOPP) has been proposed, which has been proven to possess the sublinear dynamic regret and constraint violation when a weighted path variation of optimal decision variables is sublinear. Moreover, unbalanced communication graphs have been considered for networked agents, which are more general. Finally, the algorithm's performance has been demonstrated by a numerical application. Future work can focus on further improving the convergence rate on $Reg(T)/T$ and $Reg^c(T)/T$.

\section*{Acknowledgment}

The authors are grateful to the Editor, the Associate Editor and the anonymous reviewers for their insightful suggestions.

\begin{figure}[H]
\centering
\includegraphics[width=2.9in]{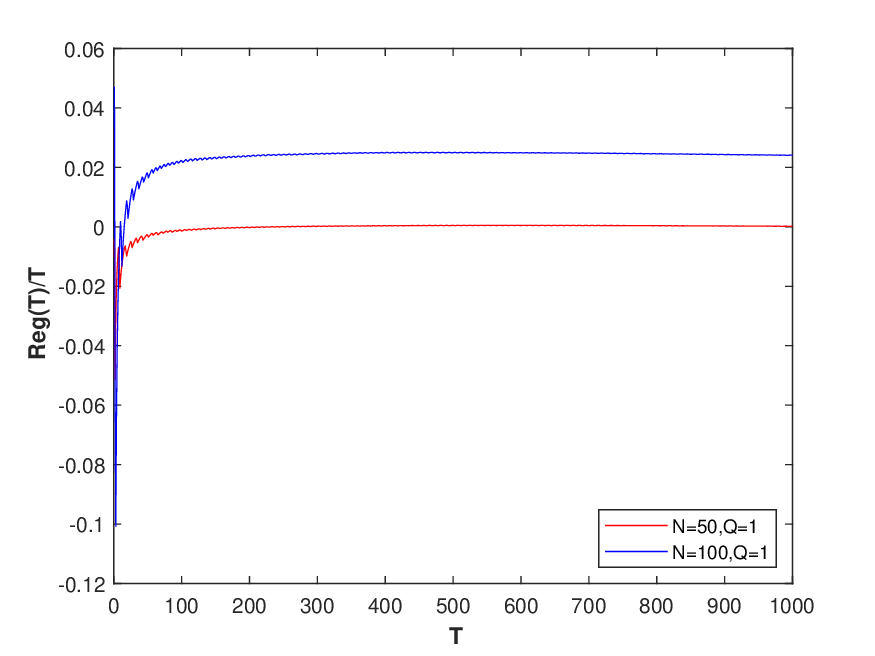}
\caption{Evolutions of $Reg(T)/T$ with $N=50$ and $N=100$ for $Q=1$.}
\label{f5}
\end{figure}

\begin{figure}[H]
\centering
\includegraphics[width=2.9in]{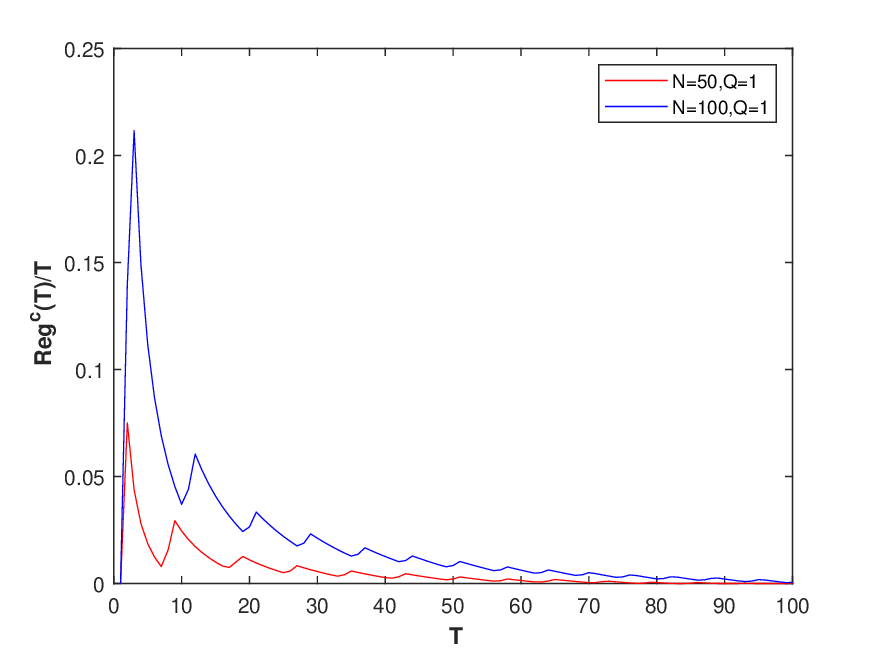}
\caption{Evolutions of $Reg^c(T)/T$ with $N=50$ and $N=100$ for $Q=1$.}
\label{f6}
\end{figure}

\begin{figure}[H]
\centering
\includegraphics[width=2.9in]{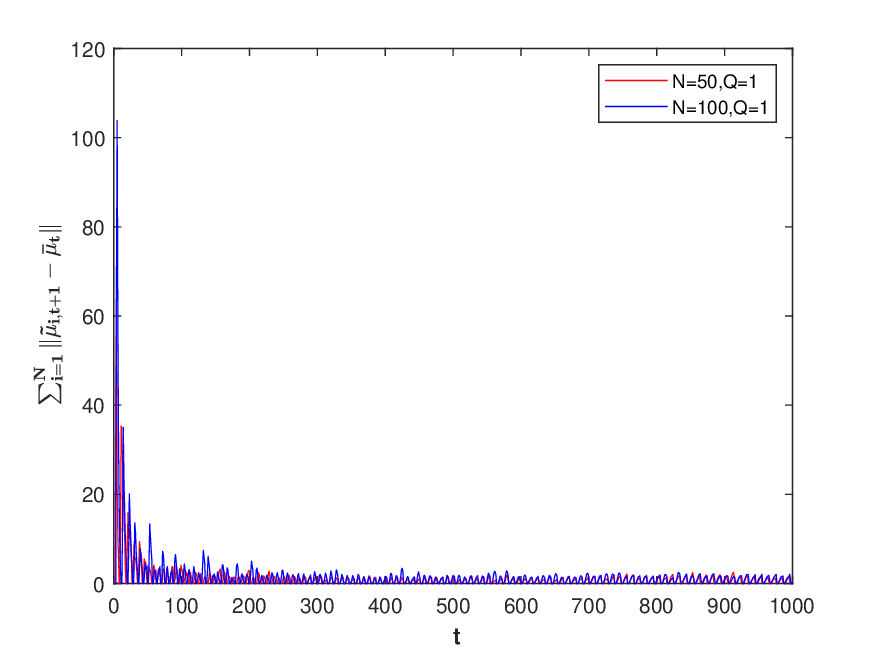}
\caption{Evolution of $\sum_{i=1}^N\|\tilde{\mu}_{i,t+1}-\bar{\mu}_t\|$ over $N$ agents.}
\label{f7}
\end{figure}

\section*{Appendix}

\subsection{Useful Lemmas}\label{s4.1}

\begin{lemma}\label{l0}
For any vector $z,v_1,\ldots,v_d\in\mathbb{R}^n$, there holds
\begin{align}
\|z\|&\leq\|z\|_1\leq\sqrt{n}\|z\|,         \nonumber\\
\|v_1+\cdots+v_d\|^2&\leq d(\|v_1\|^2+\cdots+\|v_d\|^2).      \nonumber
\end{align}
\end{lemma}

\begin{proof}
The first one is easy to be proved and can be also found in a great deal of literature. Thus, its proof is omitted here. To show the second one, consider first the case with $d=2$. Then one has that
\begin{align}
\|v_1+v_2\|^2&=\|v_1\|^2+\|v_2\|^2+2v_1^\top v_2          \nonumber\\
&\leq \|v_1\|^2+\|v_2\|^2+2\|v_1\| \|v_2\|          \nonumber\\
&\leq 2(\|v_1\|^2+\|v_2\|^2),                      \nonumber
\end{align}
where the Cauchy-Schwarz inequality has been used in the first inequality and the fact $2ab\leq a^2+b^2$ for any $a,b\in\mathbb{R}$ has been employed in the second inequality. By recursively using the same argument for the general case $d>2$, one can obtain the second asserted inequality in this lemma.
\end{proof}

A result on perturbed push-sum algorithms is listed below, which is cited from \cite{nedic2015distributed}.

\begin{lemma}\label{l1}
Consider the sequences $\{w_{i,t}\}$ with $w_{i,t}\in\mathbb{R}$ and $\{z_{i,t}\}$ with $z_{i,t}\in\mathbb{R}^{p}$, having the following dynamics:
\begin{align}
z_{i,t+1}&=\sum_{j=1}^N a_{ij,t}z_{j,t}+\epsilon_{i,t+1},                      \nonumber\\
w_{i,t+1}&=\sum_{j=1}^N a_{ij,t}w_{j,t},                                        \nonumber\\
\tilde{z}_{i,t+1}&=\frac{\sum_{j=1}^N a_{ij,t}z_{j,t}}{w_{i,t+1}},~~~\text{for}~i\in[N],t\geq 0         \label{40}
\end{align}
where $\epsilon_{i,t}$ is a perturbation for agent $i$ at time slot $t$. Denote by $\bar{z}_{t}=\frac{1}{N}\sum_{i=1}^N z_{i,t}$ the averaged variable of $z_{i,t}$'s. If Assumption \ref{a1} holds, then the following statement is true:
\begin{align}
\|\tilde{z}_{i,t+1}-\bar{z}_t\|\leq \frac{8}{r}\big(\lambda^t \|z_0\|_1+\sum_{k=1}^t \lambda^{t-k}\|\epsilon_k\|_1\big),        \nonumber
\end{align}
where $z_0:=col(z_{1,0},\ldots,z_{N,0})$, $\epsilon_k:=col(\epsilon_{1,k},\ldots,\epsilon_{N,k})$, $r:=\inf_{t=0,1,\ldots}(\min_{i\in[N]}\{A_t\cdots A_0\textbf{1}_N\}_i)$, and $\lambda\in (0,1)$, satisfying
\begin{align}
r\geq \frac{1}{N^{NQ}},~~~\lambda\leq \Big(1-\frac{1}{N^{NQ}}\Big)^{\frac{1}{NQ}}.     \nonumber
\end{align}
\end{lemma}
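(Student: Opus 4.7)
The plan is to reduce the claim to the ergodic behaviour of the column-stochastic products $\Phi(t,s) := A_t A_{t-1}\cdots A_s$ by writing both $\tilde z_{i,t+1}$ and $\bar z_t$ explicitly in terms of $z_0$ and the perturbations, and then quantifying how quickly $\Phi(t,s)$ ``forgets'' its second argument.

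First, I would unroll the dynamics. Iterating $z_{t+1}=A_t z_t+\epsilon_{t+1}$ in stacked form gives $z_{t+1} = \Phi(t,0)\,z_0 + \sum_{k=1}^{t}\Phi(t,k)\,\epsilon_k + \epsilon_{t+1}$ together with $w_{t+1}=\Phi(t,0)\mathbf{1}$, and consequently $\tilde z_{i,t+1}=(z_{t+1}-\epsilon_{t+1})_i/w_{i,t+1}$. Column-stochasticity of each $A_t$ preserves the total mass, yielding $\bar z_t=\tfrac{1}{N}\mathbf{1}^\top z_0+\tfrac{1}{N}\sum_{k=1}^{t}\mathbf{1}^\top\epsilon_k$. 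Introducing the auxiliary vectors $\xi_{j,0}:=z_{j,0}$ and $\xi_{j,k}:=\epsilon_{j,k}$ for $k\ge 1$ and writing $\phi_{ij}(t,s):=[\Phi(t,s)]_{ij}$, subtraction produces the clean decomposition
\begin{equation*}
\tilde z_{i,t+1}-\bar z_t \;=\; \sum_{k=0}^{t}\sum_{j=1}^{N}\Big(\tfrac{\phi_{ij}(t,k)}{w_{i,t+1}}-\tfrac{1}{N}\Big)\xi_{j,k},
\end{equation*}
reducing the lemma to controlling the single scalar quantity $|\phi_{ij}(t,k)/w_{i,t+1}-1/N|$.

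The technical core is therefore a geometric ergodicity estimate
\begin{equation*}
\Big|\tfrac{\phi_{ij}(t,s)}{w_{i,t+1}}-\tfrac{1}{N}\Big|\;\le\;\tfrac{8}{r}\,\lambda^{t-s},\qquad 0\le s\le t,
\end{equation*}
with $r$ and $\lambda$ as announced. I would prove this via a scrambling/Hajnal-type contraction adapted to push-sum. Assumption~\ref{a1} and the uniform lower bound $a$ on nonzero weights imply that across any window of $NQ$ consecutive steps the union graph is strongly connected, so every positive entry of the aggregated block $A_{s+NQ-1}\cdots A_s$ is at least $a^{NQ}\ge N^{-NQ}$; in particular any two columns of that block share a common positive row, which is precisely Dobrushin's scrambling condition. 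Standard contraction arguments then shrink the row-to-row $\ell_1$ variation of $\Phi(t,s)$ by a factor $(1-N^{-NQ})$ every $Q$ steps, giving $\lambda=(1-N^{-NQ})^{1/Q}$, while the same positivity yields the uniform lower bound $w_{i,t+1}\ge r=N^{-NQ}$, so that dividing by $w_{i,t+1}$ costs only a $1/r$ factor.

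Combining the ergodicity estimate with the decomposition above and the triangle inequality (using $\|\sum_j c_j v_j\|\le\max_j|c_j|\cdot\sum_j\|v_j\|_1$) then yields the advertised bound $(8/r)(\lambda^t\|z_0\|_1+\sum_{k=1}^t\lambda^{t-k}\|\epsilon_k\|_1)$. The main obstacle is the ergodicity step: unlike in the doubly-stochastic setting, the row sums of $\Phi(t,s)$ are not identically $1$, so a naive Hajnal argument does not apply directly, and it is exactly the normalization by $w_{i,t+1}$ that restores a stochastic-like behaviour. Making the contraction quantitative with the explicit constants $r$ and $\lambda$---rather than an abstract $o(1)$ bound---is the delicate part, and is what the inductive $Q$-step window analysis underlying the cited result accomplishes.
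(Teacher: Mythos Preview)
The paper does not supply its own proof of this lemma: it is stated as a result ``cited from \cite{nedic2015distributed}'' and used as a black box. Your proposal is therefore not comparable to a proof in the present paper, but it \emph{is} essentially the argument of the cited Nedi\'c--Olshevsky reference: unroll the linear dynamics via the transition products $\Phi(t,s)$, exploit column-stochasticity to compute $\bar z_t$ exactly, reduce to the scalar deviations $|\phi_{ij}(t,s)/w_{i,t+1}-1/N|$, and bound those geometrically by a Hajnal/Dobrushin scrambling argument over windows of length of order $NQ$. Your identification of the normalization by $w_{i,t+1}$ as the mechanism that compensates for the lack of row-stochasticity is exactly the point of the push-sum analysis, and the decomposition you write is the correct one; the sketch would pass as a faithful summary of the original proof.
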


In the above lemma, the parameters $r,\lambda$ can be better selected when $A_t$ is doubly stochastic, i.e., balanced graphs, for all $t\geq 1$. Please refer to \cite{nedic2015distributed} for more details.

With Lemma \ref{l1} in place, it is straightforward to see that (\ref{ag4}) and (\ref{ag5}) can be rewritten in the perturbed form (\ref{40}) as
\begin{align}
\mu_{i,t+1}&=\hat{\mu}_{i,t}+\epsilon_{\mu_{i,t+1}},        \label{ag4'}\\
y_{i,t+1}&=\hat{y}_{i,t}+\epsilon_{y_{i,t+1}},                 \label{ag5'}
\end{align}
where
\begin{align}
\epsilon_{\mu_{i,t+1}}&:=\Big[\hat{\mu}_{i,t}+\alpha_t \Big(\frac{\hat{y}_{i,t}}{w_{i,t+1}}-\beta_t\hat{\mu}_{i,t}\Big)\Big]_+-\hat{\mu}_{i,t},   \label{41}\\
\epsilon_{y_{i,t+1}}&:=g_{i}(x_{i,t+1})-g_{i}(x_{i,t}).                   \label{42}
\end{align}

To move forward, for notational simplicity, let us denote
\begin{align}
\tilde{\mu}_{i,t+1}&=\frac{\hat{\mu}_{i,t}}{w_{i,t+1}},~~~\bar{\mu}_t=\frac{1}{N}\sum_{i=1}^N \mu_{i,t},             \nonumber\\
\tilde{y}_{i,t+1}&=\frac{\hat{y}_{i,t}}{w_{i,t+1}},~~~\bar{y}_t=\frac{1}{N}\sum_{i=1}^N y_{i,t},~\forall~i\in[N], t\geq 0.              \label{43}
\end{align}

For the purpose of facilitating the following analysis, it is helpful to present some preliminary results below.

\begin{lemma}\label{l2}
If Assumption \ref{a1} holds, then
\begin{align}
r\leq w_{i,t}\leq N,~~r\leq 1,~~\|\bar{y}_t\|\leq B_g,~~\forall i\in[N],t\geq 0,          \label{le1}
\end{align}
where $r$ is defined in Lemma \ref{l1} and $B_g$ is given in (\ref{14}).
\end{lemma}

\begin{proof}
The proof can be found in Appendix D.
\end{proof}

\begin{lemma}\label{l3}
Under Assumption \ref{a1}, there exists a constant $B_y>0$ such that for all $i\in[N]$ and $t\geq 1$
\begin{align}
\|y_{i,t}\|&\leq B_y,~~~~~~~~~~~\|\hat{y}_{i,t}\|\leq B_y,                           \label{44}\\
\|\hat{\mu}_{i,t}\|&\leq \frac{w_{i,t+1}B_y}{\beta_t r^2},~~~\|\mu_{i,t}\|\leq \frac{w_{i,t+1}B_y}{\beta_t r^2a},       \label{45}
\end{align}
where $a$ is given in Assumption \ref{a1}.
\end{lemma}

\begin{proof}
The proof can be found in Appendix E.
\end{proof}

\begin{lemma}\label{l4a}
Under Assumptions \ref{a1} and \ref{a2}, there holds
\begin{align}
&\sum_{t=1}^T\sum_{i=1}^N \|\tilde{\mu}_{i,t+1}-\bar{\mu}_t\|\leq \frac{8N^2(N+1)\sqrt{m}B_y}{r^3(1-\lambda)}\sum_{k=0}^{T-1}\alpha_{k},               \label{78}\\
&\sum_{t=1}^T\sum_{i=1}^N \|\tilde{y}_{i,t+1}-\bar{y}_t\|\leq \frac{8N\lambda\|y_0\|_1}{r(1-\lambda)}          \nonumber\\
&\hspace{0.3cm}+\frac{8\sqrt{m}N^2C_fC_g}{r(1-\lambda)}\sum_{k=0}^{T-1}\alpha_{k}+\frac{8\sqrt{m}N^2B_yC_g^2}{r^3(1-\lambda)}\sum_{k=0}^{T-1}\frac{\alpha_k}{\beta_k}.     \label{83}
\end{align}
\end{lemma}
\begin{proof}
The proof can be found in Appendix F.
\end{proof}

Equipped with the above results, it is now ready to present the results on the disagreement of $L_t(x,\mu)$ at different points.

\begin{lemma}\label{l4}
Let $x_t=col(x_{1,t},\ldots,x_{N,t})$. Under Assumptions \ref{l1} and \ref{l2}, then for all $x=col(x_1,\ldots,x_N)\in X$ and $\mu\in\mathbb{R}_+^m$,
\begin{align}
&L_t(x_t,\bar{\mu}_t)-L_t(x,\bar{\mu}_t)            \nonumber\\
&\hspace{0.1cm}\leq \frac{1}{2\alpha_t}\sum_{i=1}^N \big(\|x_{i,t}-x_i\|^2-\|x_{i,t+1}-x_i\|^2\big)         \nonumber\\
&\hspace{0.4cm}+\frac{\alpha_t N}{2}\Big(C_f+\frac{C_g B_y}{\beta_t r^2}\Big)^2+2B_g\sum_{i=1}^N \|\tilde{\mu}_{i,t+1}-\bar{\mu}_t\|,                    \label{56}\\
&L_t(x_t,\mu)-L_t(x_t,\bar{\mu}_t)               \nonumber\\
&\hspace{0.1cm}\leq \frac{N}{2\alpha_t} \big(\|\bar{\mu}_{t}-\mu\|^2-\|\bar{\mu}_{t+1}-\mu\|^2\big)         \nonumber\\
&\hspace{0.4cm}+\Big(\|\mu\|+\frac{B_y}{\beta_t r^2}\Big)\sum_{i=1}^N \|\tilde{y}_{i,t+1}-\bar{y}_t\|+\frac{2\alpha_t N^3 B_y^2(r+2)}{r^5}       \nonumber\\
&\hspace{0.4cm}+\Big(B_g+\frac{2NB_y}{r^2}\Big)\sum_{i=1}^N \|\tilde{\mu}_{i,t+1}-\bar{\mu}_t\|+\frac{N^2\beta_t}{2}\|\mu\|^2.            \label{57}
\end{align}
\end{lemma}

\begin{proof}
The proof can be found in Appendix G.
\end{proof}

\subsection{Proof of Theorem \ref{t1}}\label{s4.2}

The proof is divided into three parts.

{\bf Part 1: To show (\ref{29}).} By virtue of Lemma \ref{l4}, it can be obtained that for all $x\in X$ and $\mu\in\mathbb{R}^m_+$
\begin{align}
&L_t(x_t,\mu)-L_t(x,\bar{\mu}_t)           \nonumber\\
&=L_t(x_t,\mu)-L_t(x_t,\bar{\mu}_t)+L_t(x_t,\bar{\mu}_t)-L_t(x,\bar{\mu}_t)      \nonumber\\
&\leq \frac{1}{2\alpha_t}\sum_{i=1}^N (\|x_{i,t}-x_i\|^2-\|x_{i,t+1}-x_i\|^2)      \nonumber\\
&\hspace{0.4cm} +\frac{N}{2\alpha_t} (\|\bar{\mu}_{t}-\mu\|^2-\|\bar{\mu}_{t+1}-\mu\|^2)      \nonumber\\
&\hspace{0.4cm} +\Big(3B_g+\frac{2NB_y}{r^2}\Big)\sum_{i=1}^N \|\tilde{\mu}_{i,t+1}-\bar{\mu}_t\|            \nonumber\\
&\hspace{0.4cm} +\Big(\|\mu\|+\frac{B_y}{\beta_t r^2}\Big)\sum_{i=1}^N \|\tilde{y}_{i,t+1}-\bar{y}_t\|+\frac{N^2\beta_t}{2}\|\mu\|^2          \nonumber\\
&\hspace{0.4cm} +\frac{2\alpha_t N^3B_y^2(r+2)}{r^5}+\frac{N\alpha_t}{2}\Big(C_f+\frac{C_gB_y}{\beta_t r^2}\Big)^2.              \label{68}
\end{align}
Meanwhile, by letting $x=x_t^*$ with $x^*=col(x_{1,t}^*,\ldots,x_{N,t}^*)$ being given in (\ref{10}), it is easy to verify that
\begin{align}
&L_t(x_t,\mu)-L_t(x_t^*,\bar{\mu}_t)-\frac{N^2\beta_t}{2}\|\mu\|^2           \nonumber\\
&=\sum_{i=1}^N f_{i,t}(x_{i,t})+\mu^\top \sum_{i=1}^N g_{i}(x_{i,t})        \nonumber\\
&\hspace{0.4cm}-\sum_{i=1}^N f_{i,t}(x_{i,t}^*)-\bar{\mu}_t^\top\sum_{i=1}^N g_{i}(x_{i,t}^*)-\frac{N^2\beta_t}{2}\|\mu\|^2      \nonumber\\
&\geq \sum_{i=1}^N f_{i,t}(x_{i,t})-\sum_{i=1}^N f_{i,t}(x_{i,t}^*)             \nonumber\\
&\hspace{0.4cm}+\mu^\top \sum_{i=1}^N g_{i}(x_{i,t})-\frac{N^2\beta_t}{2}\|\mu\|^2,              \label{69}
\end{align}
where the inequality is obtained by resorting to $\bar{\mu}_t\geq \textbf{0}$ and $\sum_{i=1}^N g_{i}(x_{i,t}^*)\leq \textbf{0}$. For ease of exposition, define
\begin{align}
g_e(\mu):=\mu^\top \sum_{t=1}^T\sum_{i=1}^N g_{i}(x_{i,t})-\frac{N^2\|\mu\|^2}{2}\sum_{t=1}^T\beta_t.             \label{70}
\end{align}
By selecting $x=x_t^*$ and using the fact $(a+b)^2\leq 2(a^2+b^2)$ for any $a,b\in\mathbb{R}$ for the last term in (\ref{68}), combining (\ref{68}) with (\ref{69}) and summing over $t\in[T]$ yield that for all $\mu\in\mathbb{R}_+^m$,
\begin{align}
&\sum_{t=1}^T\sum_{i=1}^N f_{i,t}(x_{i,t})-\sum_{t=1}^T\sum_{i=1}^N f_{i,t}(x_{i,t}^*)+g_e(\mu)        \nonumber\\
&\leq \underbrace{\sum_{t=1}^T\frac{1}{2\alpha_t}\sum_{i=1}^N(\|x_{i,t+1}-x_{i,t+1}^*\|^2-\|x_{i,t+1}-x_{i,t}^*\|^2)}_{=:S_0}        \nonumber\\
&\hspace{0.4cm}+\underbrace{\sum_{t=1}^T\frac{1}{2\alpha_t}\sum_{i=1}^N (\|x_{i,t}-x_{i,t}^*\|^2-\|x_{i,t+1}-x_{i,t+1}^*\|^2)}_{=:S_1}      \nonumber\\
&\hspace{0.4cm} +\underbrace{\sum_{t=1}^T\frac{N}{2\alpha_t} (\|\bar{\mu}_{t}-\mu\|^2-\|\bar{\mu}_{t+1}-\mu\|^2)}_{=:S_2(\mu)}      \nonumber\\
&\hspace{0.4cm} +\underbrace{\Big(3B_g+\frac{2NB_y}{r^2}\Big)\sum_{t=1}^T\sum_{i=1}^N \|\tilde{\mu}_{i,t+1}-\bar{\mu}_t\|}_{=:S_3}            \nonumber\\
&\hspace{0.4cm} +\underbrace{\sum_{t=1}^T\Big(\|\mu\|+\frac{B_y}{\beta_t r^2}\Big)\sum_{i=1}^N \|\tilde{y}_{i,t+1}-\bar{y}_t\|}_{=:S_4(\mu)}          \nonumber\\
&\hspace{0.4cm} +\underbrace{N\Big(C_f^2+\frac{2N^2B_y^2(r+2)}{r^5}\Big)\sum_{t=1}^T \alpha_t}_{=:S_5}+\underbrace{\frac{NB_y^2C_g^2}{r^4}\sum_{t=1}^T\frac{\alpha_t}{\beta_t^2}}_{=:S_6}.             \label{71}
\end{align}

In the following the terms $S_i,i=0,1,\ldots,6$ are gradually analyzed. First, it is easy to obtain that
\begin{align}
S_0&=\sum_{t=1}^T\frac{1}{2\alpha_t}\sum_{i=1}^N(x_{i,t+1}^*-x_{i,t}^*)^\top(x_{i,t}^*+x_{i,t+1}^*-2x_{i,t+1})    \nonumber\\
&\leq 2B_x\sum_{t=1}^T\frac{1}{\alpha_t}\sum_{i=1}^N\|x_{i,t+1}^*-x_{i,t}^*\|            \nonumber\\
&=2B_x V_T,         \label{rp5}
\end{align}
where the inequality was resulted from the Cauchy-Schwarz inequality and (\ref{12}), and $V_T$ is defined in (\ref{VT}).

For $S_1$, some calculations can lead to that
\begin{align}
S_1&=\frac{1}{2\alpha_1}\sum_{i=1}^N\|x_{i,1}-x_{i,1}^*\|^2-\frac{1}{2\alpha_T}\sum_{i=1}^N\|x_{i,T+1}-x_{i,T+1}^*\|^2       \nonumber\\
&\hspace{0.4cm}+\frac{1}{2}\sum_{t=2}^T\Big(\frac{1}{\alpha_t}-\frac{1}{\alpha_{t-1}}\Big)\sum_{i=1}^N\|x_{i,t}-x_{i,t}^*\|^2       \nonumber\\
&\leq \frac{1}{\alpha_1}\sum_{i=1}^N(\|x_{i,1}\|^2+\|x_{i,1}^*\|^2)       \nonumber\\
&\hspace{0.4cm}+\sum_{t=2}^T\Big(\frac{1}{\alpha_t}-\frac{1}{\alpha_{t-1}}\Big)\sum_{i=1}^N(\|x_{i,t}\|^2+\|x_{i,t}^*\|^2)       \nonumber\\
&\leq \frac{2NB_x^2}{\alpha_T},               \label{72}
\end{align}
where Lemma \ref{l0} and $\frac{1}{\alpha_t}-\frac{1}{\alpha_{t-1}}>0$ have been leveraged for obtaining the first inequality, and $\|x_{i,t}^*\|\leq B_x,\|x_{i,t}\|\leq B_x,\forall i\in[N],t\geq 0$ have been used for inferring the last inequality.

Similarly, by letting $\mu=\textbf{0}$, one can have that
\begin{align}
S_2(\textbf{0})&=\frac{N}{2\alpha_1}\|\bar{\mu}_{1}\|^2-\frac{N}{2\alpha_T}\|\bar{\mu}_{T+1}\|^2       \nonumber\\
&\hspace{0.4cm}+\frac{N}{2}\sum_{t=2}^T(\frac{1}{\alpha_t}-\frac{1}{\alpha_{t-1}})\|\bar{\mu}_{t}\|^2          \nonumber\\
&\leq \frac{N^3B_y^2}{2a^2r^4\alpha_T\beta_T^2},               \label{73}
\end{align}
where we have made use of $\|\bar{\mu}_{t}\|\leq \sum_{i=1}^N\|\mu_{i,t}\|/N\leq w_{i,t+1}B_{y}/(a r^2\beta_t)\leq NB_y/(ar^2\beta_T)$ by Lemma \ref{l3} and $w_{i,t+1}\leq N$ in Lemma \ref{l2} for obtaining the inequality.

To bound $S_3$, invoking (\ref{78}) yields that
\begin{align}
S_3\leq \Big(3B_g+\frac{2NB_y}{r^2}\Big)\frac{8N^2(N+1)\sqrt{m}B_y}{r^3(1-\lambda)}\sum_{k=0}^{T-1}\alpha_{k}.      \label{79}
\end{align}

To bound $S_4(\mu)$ for $\mu=\textbf{0}$, by using (\ref{83}) and observing that $\beta_t\geq\beta_T$ for $t\leq T$, one can obtain that
\begin{align}
S_{4}(\textbf{0})&\leq \frac{B_y}{r^2\beta_T}\sum_{t=1}^T \sum_{i=1}^N \|\tilde{y}_{i,t+1}-\bar{y}_t\|           \nonumber\\
&\leq \frac{B_y}{r^2}\bigg[\frac{8N\lambda\|y_0\|_1}{r(1-\lambda)\beta_T}+\frac{8\sqrt{m}N^2C_fC_g}{r(1-\lambda)}\sum_{k=0}^{T-1}\frac{\alpha_{k}}{\beta_T}          \nonumber\\
&\hspace{1.3cm}+\frac{8\sqrt{m}N^2B_yC_g^2}{r^3(1-\lambda)}\sum_{k=0}^{T-1}\frac{\alpha_k}{\beta_T^2}\bigg].               \label{84}
\end{align}

With regard to $S_6$, it can be concluded that
\begin{align}
S_6&=\frac{N B_y^2C_g^2}{r^4}\sum_{t=1}^T t^{2\kappa-\frac{1}{2}}                \nonumber\\
&\leq \frac{N B_y^2C_g^2}{r^4}\Big(1+\int_1^T t^{2\kappa-\frac{1}{2}}dt\Big)            \nonumber\\
&=\frac{N B_y^2C_g^2}{r^4}\Big(1-\frac{2}{1+4\kappa}+\frac{2T^{\frac{1}{2}+2\kappa}}{1+4\kappa}\Big).       \label{85}
\end{align}

Note that $\sum_{k=0}^{T-1}\alpha_k\leq 2+\int_1^{T-1}t^{-1/2}dt=2(T-1)^{1/2}=O(T^{1/2})$ and also $\sum_{k=1}^{T}\alpha_k=O(T^{1/2})$. Thus, it is easy to verify that
\begin{align}
S_0=O_+(V_T),~~~&S_1=O_+(T^{\frac{1}{2}}),~~~S_2(\textbf{0})=O_+(T^{\frac{1}{2}+2\kappa}),     \nonumber\\
&\hspace{-0.8cm}S_3=O_+(T^{\frac{1}{2}}),~~~S_4(\textbf{0})=O_+(T^{\frac{1}{2}+2\kappa}),     \nonumber\\
&\hspace{-0.8cm}S_5=O_+(T^{\frac{1}{2}}),~~~S_6=O_+(T^{\frac{1}{2}+2\kappa}),                 \nonumber
\end{align}
which together with (\ref{71}) and $g_e(\textbf{0})=0$ completes the proof of (\ref{29}) in Theorem \ref{t1}.

{\bf Part 2: To show (\ref{30}).} Note that (\ref{71}) still holds for all $\mu\in\mathbb{R}_+^m$, when $x_{i,t}^*$ and $x_{i,t+1}^*$ are replaced with $x_i$, where $x=col(x_1,\ldots,x_N)$ is any point in $\mathcal{X}$ defined in (\ref{8}). In this case, $S_0=0$. Meanwhile, it is straightforward to verify that function $g_e(\mu)$, defined in (\ref{70}), can achieve its maximal value
\begin{align}
\frac{1}{2N^2\sum_{t=1}^T \beta_t}\Big\|\Big[\sum_{t=1}^T \sum_{i=1}^N g_{i}(x_{i,t})\Big]_+\Big\|^2          \label{86}
\end{align}
when $\mu=\mu_0$, where
\begin{align}
\mu_0:=\frac{\Big[\sum_{t=1}^T \sum_{i=1}^N g_{i}(x_{i,t})\Big]_+}{N^2\sum_{t=1}^T \beta_t},              \label{87}
\end{align}
which together with (\ref{71}) results in
\begin{align}
&\sum_{t=1}^T\sum_{i=1}^N f_{i,t}(x_{i,t})-\sum_{t=1}^T\sum_{i=1}^N f_{i,t}(x_{i})+\frac{(Reg^c(T))^2}{2N^2\sum_{t=1}^T \beta_t}        \nonumber\\
&\leq \sum_{t=1}^T\frac{1}{2\alpha_t}\sum_{i=1}^N (\|x_{i,t}-x_{i}\|^2-\|x_{i,t+1}-x_{i}\|^2)           \nonumber\\
&\hspace{0.4cm}+S_2(\mu_0)+S_3+S_4(\mu_0)+S_5+S_6.             \label{88}
\end{align}
Simple manipulations lead to that for $T\geq 4$ and $\kappa\in (0,1/4)$
\begin{align}
&\sum_{t=1}^T \beta_t\geq \int_1^T t^{-\kappa}dt =\frac{T^{1-\kappa}-1}{1-\kappa}\geq\frac{T^{1-\kappa}}{2(1-\kappa)},        \label{89}\\
&\sum_{t=1}^T \beta_t\leq 1+\int_1^T t^{-\kappa}dt =\frac{T^{1-\kappa}-\kappa}{1-\kappa}\leq\frac{T^{1-\kappa}}{1-\kappa},      \label{90}
\end{align}
which, together with (\ref{14}), gives rise to
\begin{align}
\|\mu_0\|\leq \frac{TB_g}{N\sum_{t=1}^T\beta_t}\leq \frac{2B_g(1-\kappa)T^{\kappa}}{N}.              \label{91}
\end{align}

By resorting to the similar arguments to bound $S_i$'s in (\ref{71}) and further applying (\ref{89})-(\ref{91}), we can bound the right-hand terms of (\ref{88}) as
\begin{align}
&\sum_{t=1}^T\sum_{i=1}^N f_{i,t}(x_{i,t})-\sum_{t=1}^T\sum_{i=1}^N f_{i,t}(x_{i})+\frac{(Reg^c(T))^2}{2N^2\sum_{t=1}^T \beta_t}        \nonumber\\
&=O_+(T^{\frac{1}{2}+2\kappa}).             \label{92}
\end{align}

Additionally, with reference to (\ref{12}) and (\ref{15}), one has that
\begin{align}
&\sum_{t=1}^T\sum_{i=1}^N f_{i,t}(x_{i,t})-\sum_{t=1}^T\sum_{i=1}^N f_{i,t}(x_{i})       \nonumber\\
&=\sum_{t=1}^T\sum_{i=1}^N (f_{i,t}(x_{i,t})-f_{i,t}(x_{i}))          \nonumber\\
&\geq -\sum_{t=1}^T\sum_{i=1}^N C_f\|x_{i,t}-x_{i}\|                   \nonumber\\
&\geq -2NTC_fB_x.                                        \label{93}
\end{align}
Inserting (\ref{93}) to (\ref{92}) gives that
\begin{align}
(Reg^c(T))^2&\leq \sum_{t=1}^T \beta_t\cdot O_+(T^{\frac{1}{2}+2\kappa})+4N^3C_fB_xT\sum_{t=1}^T\beta_t                            \nonumber\\
&= O_+(T^{\frac{3}{2}+\kappa})+O_+(T^{2-\kappa})             \nonumber\\
&= O_+(T^{2-\kappa}),                                      \label{94}
\end{align}
where we have employed (\ref{90}) to obtain the first equality, and $3/2+\kappa<2-\kappa$ due to $\kappa<1/4$ for the second equality. Obviously, (\ref{94}) is equivalent to (\ref{30}).

{\bf Part 3: To show (\ref{29b}).} In light of Assumption \ref{a2}.3, the Lagrangian function $L_t(x,\mu)$ in (\ref{18}) indeed has saddle points for all $t\in[T]$. Denote by $\mu_t^*$ an optimal dual variable of $L_{t}(x,\mu)$ corresponding to $x_t^*$. It is known by Lemma 1 in \cite{nedic2009approximate} that $\mu_t^*$ is bounded and the upper bound is independent of $t$ due to (\ref{13})-(\ref{14}). Therefore, one has that $L_t(x_t^*,\mu_t^*)\leq L_t(x,\mu_t^*)$ for all $x\in X$, by which choosing $x=x_t=col(x_{1,t},\ldots,x_{N,t})$ further implies that
\begin{align}
f_t(x_t^*)+(\mu_t^*)^\top g(x_t^*)\leq f_t(x_t)+(\mu_t^*)^\top g(x_t).          \label{rpp3}
\end{align}

Note that $(\mu_t^*)^\top g(x_t^*)=0$ by the optimality criteria. It can be then obtained that
\begin{align}
\sum_{i=1}^N f_{i,t}(x_{i,t})-\sum_{i=1}^N f_{i,t}(x_{i,t}^*)&\geq -(\mu_t^*)^\top\sum_{i=1}^N g_i(x_{i,t})                  \nonumber\\
&\hspace{-1.1cm}\geq -(\textbf{1}_m\otimes\bar{\mu}^*)^\top\sum_{i=1}^N g_i(x_{i,t}),          \label{rp8}
\end{align}
where the second inequality has employed $\sum_{i=1}^N g_i(x_{i,t})>\textbf{0}$ for the worst case studied in Theorem \ref{t1}, and
\begin{align}
\bar{\mu}^*:=\max_{t\in[T],l\in[m]}\big\{\mu_{t}^*\big\}_l         \label{mu}
\end{align}
with $\{\cdot\}_l$ denoting the $l$-th component of a vector.

By summing (\ref{rp8}) over $t\in[T]$ and invoking Lemma \ref{l0}, it is straightforward to obtain that
\begin{align}
&\sum_{t=1}^T\sum_{i=1}^N f_{i,t}(x_{i,t})-\sum_{t=1}^T\sum_{i=1}^N f_{i,t}(x_{i,t}^*)           \nonumber\\
&\geq -(\textbf{1}_m\otimes\bar{\mu}^*)^\top\sum_{t=1}^T\sum_{i=1}^N g_i(x_{i,t})      \nonumber\\
&= -(\textbf{1}_m\otimes\bar{\mu}^*)^\top\Big[\sum_{t=1}^T\sum_{i=1}^N g_i(x_{i,t})\Big]_+      \nonumber\\
&=-\bar{\mu}^*\Big\|\Big[\sum_{t=1}^T\sum_{i=1}^N g_i(x_{i,t})\Big]_+\Big\|_1        \nonumber\\
&\geq -\sqrt{m}\bar{\mu}^*\Big\|\Big[\sum_{t=1}^T\sum_{i=1}^N g_i(x_{i,t})\Big]_+\Big\|     \nonumber\\
&=-\sqrt{m}\bar{\mu}^* Reg^c(T),          \label{rpp4}
\end{align}
which, in conjunction with (\ref{30}), leads to (\ref{29b}). This completes the proof of Theorem \ref{t1}.

\subsection{Proof of Theorem \ref{t2}}\label{s4.3}

This section gives the proof of Theorem \ref{t2} when $f_{i,t}$'s are independent of time for all $i\in[N]$, denoted by $f_{i}$ in this section. Note that $V_T=0$ in this case.

Let us first prove (\ref{35}). By using the same argument as Theorem \ref{t1}, it can be obtained that
\begin{align}
Reg(T)=O_+(T^{\frac{1}{2}+2\kappa}).     \label{rp17}
\end{align}

Appealing to the convexity of $f_i$'s can lead to
\begin{align}
\frac{Reg(T)}{T}&=\sum_{t=1}^T\frac{1}{T}\sum_{i=1}^{N}f_i(x_{i,t})-\sum_{i=1}^{N}f_i(x_{i}^*)          \nonumber\\
&\geq \sum_{i=1}^{N}f_i(\bar{x}_{i,T})-\sum_{i=1}^{N}f_i(x_{i}^*),           \label{rp18}
\end{align}
which, together with (\ref{rp17}), gives rise to the assertion (\ref{35}).

To show (\ref{36}), define
\begin{align}
f(x)&=\sum_{i=1}^N f_i(x_i),               \label{96}\\
L(x,\mu)&=f(x)+\mu^\top g(x),             \label{97}
\end{align}
where $g(x)=\sum_{i=1}^N g_i(x_i)$. Note that in light of Assumption \ref{a2}.3, the Lagrangian function $L(x,\mu)$ indeed has saddle points. Now, invoking the property of saddle points can imply that $L(x^*,\mu^*)\leq L(x,\mu^*)$ for all $x\in X$, where $\mu^*\in\mathbb{R}_+^m$ is an optimal dual variable, which is equivalent to
\begin{align}
f(x^*)+(\mu^*)^\top g(x^*)\leq f(x_t)+(\mu^*)^\top g(x_t)             \label{98}
\end{align}
when letting $x=x_t:=col(x_{1,t},\ldots,x_{N,t})$. Then, summing (\ref{98}) over $t$ gives rise to
\begin{align}
&\sum_{t=1}^T\sum_{i=1}^N f_i(x_{i,t})-\sum_{t=1}^T\sum_{i=1}^N f_i(x_i^*)      \nonumber\\
&\geq -(\mu^*)^\top \sum_{t=1}^T\sum_{i=1}^N g_i(x_{i,t})        \nonumber\\
&\geq -(\mu^*)^\top \Big[\sum_{t=1}^T\sum_{i=1}^N g_i(x_{i,t})\Big]_+,             \label{99}
\end{align}
where we have employed the fact that $(\mu^*)^\top g(x^*)=0$ in the first inequality, and $\sum_{t=1}^T\sum_{i=1}^N g_i(x_{i,t})\leq \big[\sum_{t=1}^T\sum_{i=1}^N g_i(x_{i,t})\big]_+$ and $\mu^*\in\mathbb{R}_+^m$ in the last inequality.

Inserting (\ref{99}) into (\ref{92}) yields that
\begin{align}
\frac{(Reg^c(T))^2}{2N^2\sum_{t=1}^T \beta_t}-(\mu^*)^\top \Big[\sum_{t=1}^T\sum_{i=1}^N g_i(x_{i,t})\Big]_+=O_+(T^{\frac{1}{2}+2\kappa}),             \label{100}
\end{align}
which implies that
\begin{align}
&\Big\|\Big[\sum_{t=1}^T\sum_{i=1}^N g_i(x_{i,t})\Big]_+ -N^2\mu^*\sum_{t=1}^T\beta_t\Big\|^2      \nonumber\\
&=N^4\|\mu^*\|^2 \Big(\sum_{t=1}^T\beta_t\Big)^2 +O(T^{\frac{1}{2}+2\kappa})\cdot\sum_{t=1}^T \beta_t.             \label{101}
\end{align}
With reference to (\ref{90}), it can be obtained by (\ref{101}) that
\begin{align}
&\Big\|\Big[\sum_{t=1}^T\sum_{i=1}^N g_i(x_{i,t})\Big]_+ -N^2\mu^*\sum_{t=1}^T\beta_t\Big\|^2         \nonumber\\
&=\left\{
    \begin{array}{ll}
      O(T^{2-2\kappa}), & \kappa\in(0,\frac{1}{6}] \\
      O(T^{\frac{3}{2}+\kappa}), & \kappa\in[\frac{1}{6},\frac{1}{4}).
    \end{array}
  \right.             \label{102}
\end{align}
By considering the components, one has that for $l\in[m]$
\begin{align}
&\bigg|\Big\{\Big[\sum_{t=1}^T\sum_{i=1}^N g_i(x_{i,t})\Big]_+ -N^2\mu^*\sum_{t=1}^T\beta_t\Big\}_l\bigg|       \nonumber\\
&=\left\{
    \begin{array}{ll}
      O(T^{1-\kappa}), & \kappa\in(0,\frac{1}{6}] \\
      O(T^{\frac{3}{4}+\frac{\kappa}{2}}), & \kappa\in[\frac{1}{6},\frac{1}{4}).
    \end{array}
  \right.                \label{103}
\end{align}
Invoking the fact that $|a-b|\geq |a|-|b|$ for all $a,b\in\mathbb{R}$, it can be obtained that
\begin{align}
&\Big\{\Big[\sum_{t=1}^T\sum_{i=1}^N g_i(x_{i,t})\Big]_+\Big\}_l        \nonumber\\
&\leq \Big\{N^2\mu^*\sum_{t=1}^T\beta_t\Big\}_l+\left\{
    \begin{array}{ll}
      O(T^{1-\kappa}), & \kappa\in(0,\frac{1}{6}] \\
      O(T^{\frac{3}{4}+\frac{\kappa}{2}}), & \kappa\in[\frac{1}{6},\frac{1}{4})
    \end{array}
  \right.                \nonumber\\
&=\left\{
    \begin{array}{ll}
      O(T^{1-\kappa}), & \kappa\in(0,\frac{1}{6}] \\
      O(T^{\frac{3}{4}+\frac{\kappa}{2}}), & \kappa\in[\frac{1}{6},\frac{1}{4})
    \end{array}
  \right.                \label{104}
\end{align}
where (\ref{90}) has been used to obtain the equality.

Consequently, by Lemma \ref{l0} and (\ref{104}), one can obtain that
\begin{align}
Reg^c(T)&\leq \Big\|\Big[\sum_{t=1}^T\sum_{i=1}^N g_i(x_{i,t})\Big]_+\Big\|_1              \nonumber\\
&=\sum_{l=1}^m \Big\{\Big[\sum_{t=1}^T\sum_{i=1}^N g_{i}(x_{i,t})\Big]_+\Big\}_l           \nonumber\\
&=\left\{
    \begin{array}{ll}
      O(T^{1-\kappa}), & \kappa\in(0,\frac{1}{6}] \\
      O(T^{\frac{3}{4}+\frac{\kappa}{2}}), & \kappa\in[\frac{1}{6},\frac{1}{4}).
    \end{array}
  \right.                \label{105}
\end{align}

Now, appealing to the convexity of $g_i$'s, one has that
\begin{align}
\frac{Reg^c(T)}{T}&=\Big\|\Big[\sum_{t=1}^T\frac{1}{T}\sum_{i=1}^N g_i(x_{i,t})\Big]_+\Big\|          \nonumber\\
&\geq \Big\|\Big[\sum_{i=1}^N g_i(\bar{x}_{i,T})\Big]_+\Big\|,             \label{rp16}
\end{align}
which, in combination with (\ref{105}), proves the assertion (\ref{36}).

It remains to show (\ref{35b}). Invoking (\ref{99}) and (\ref{104}) yields that
\begin{align}
&\frac{1}{T}\sum_{t=1}^T\sum_{i=1}^N f_i(x_{i,t})-\sum_{i=1}^N f_i(x_i^*)      \nonumber\\
&\geq -\frac{(\mu^*)^\top}{T} \Big[\sum_{t=1}^T\sum_{i=1}^N g_i(x_{i,t})\Big]_+        \nonumber\\
&\geq -\frac{\mu_M^*}{T}\sum_{l=1}^m\Big\{\Big[\sum_{t=1}^T\sum_{i=1}^N g_i(x_{i,t})\Big]_+\Big\}_l     \nonumber\\
&=\left\{
    \begin{array}{ll}
      \Omega(-T^{-\kappa}), & \kappa\in(0,\frac{1}{6}] \\
      \Omega(-T^{-\frac{1}{4}+\frac{\kappa}{2}}), & \kappa\in[\frac{1}{6},\frac{1}{4}),
    \end{array}
  \right.             \label{nn1}
\end{align}
where $\mu_M^*:=\max_{l\in[m]}\{\mu^*\}_l$. This completes the proof.

\subsection{Proof of Lemma \ref{l2}}\label{ap2}

First, $w_{i,t}\geq r$ follows directly from the definition of $r$ in Lemma \ref{l1} once noting that $w_{i,0}=1$ for all $i\in[N]$. To prove $w_{i,t}\leq N$, it is easy to see that (\ref{ag1}) can be rewritten as
\begin{align}
w_{t+1}=A_tw_t,      \label{49}
\end{align}
where $w_t:=col(w_{1,t},\ldots,w_{N,t})$. By pre-multiplying $\textbf{1}^\top$ on both sides of (\ref{49}), one has that $\sum_{i=1}^N w_{i,t+1}=\sum_{i=1}^N w_{i,t}$ for all $t\geq 0$, which combines with the fact that $w_{i,0}=1$ for all $i\in[N]$ gives rise to that $\sum_{i=1}^N w_{i,t}=N$ for all $t\geq 0$. Observing the fact that $w_{i,t}\geq 0$, it can be concluded that $w_{i,t}\leq N$. Next, let us show that $r\leq 1$ by contradiction. If $r>1$, in view of $w_{i,t}\geq r$, then $\sum_{i=1}^N w_{i,t}\geq Nr>N$, contradicting $\sum_{i=1}^N w_{i,t}=N$. Hence, $r\leq 1$.

Finally, it remains to prove $\|\bar{y}_t\|\leq B_g$. In view of (\ref{ag5}), one can obtain that
\begin{align}
y_{t+1}=(A_t\otimes I_m)y_t +G(x_{t+1})-G(x_t),           \label{48}
\end{align}
where $x_t:=col(x_{1,t},\ldots,x_{N,t})$, $y_t:=col(y_{1,t},\ldots,y_{N,t})$, and $G(x_t):=col(g_{1}(x_{1,t}),\ldots,g_{N}(x_{N,t}))$. By pre-multiplying $\textbf{1}^\top$ on both sides of (\ref{48}), it can obtain that $\sum_{i=1}^N y_{i,t+1}=\sum_{i=1}^N y_{i,t}+g(x_{t+1})-g(x_t)$, and thus it yields that $\bar{y}_{t+1}-g(x_{t+1})/N=\bar{y}_t-g(x_t)/N$. Combining with $y_{i,0}=g_{i}(x_{i,0})$ results in that $\bar{y}_t=g(x_t)/N$ for all $t\geq 1$, thereby implying that $\|\bar{y}_t\|\leq B_g$ by (\ref{14}). This finishes the proof.

\subsection{Proof of Lemma \ref{l3}}\label{ap3}

Let us first prove (\ref{44}). In view of (\ref{ag5'}), it follows from Lemma \ref{l1} that
\begin{align}
\|\tilde{y}_{i,t+1}-\bar{y}_t\|\leq \frac{8}{r}\Big(\lambda^t\|y_0\|_1+\sum_{k=1}^t\lambda^{t-k}\|\epsilon_{y,k}\|_1\Big),       \label{47}
\end{align}
where $r,\lambda$ are given in Lemma \ref{l1}, $y_0:=col(y_{1,0},\ldots,y_{N,0})$ and $\epsilon_{y,k}:=col(\epsilon_{y_{1,k}},\ldots,\epsilon_{y_{N,k}})$. It is easy to see that $\|\epsilon_{y_{i,t+1}}\|_1\leq \sqrt{m}\|\epsilon_{y_{i,t+1}}\|\leq 2\sqrt{m}B_g$, where Lemma \ref{l0} and (\ref{14}) have been used to obtain the first and second inequalities, respectively. As a result, one has that $\sum_{k=1}^t \lambda^{t-k}\|\epsilon_{y,k}\|_1=\sum_{k=1}^t\sum_{i=1}^N \lambda^{t-k}\|\epsilon_{y_{i,k}}\|_1\leq 2N\sqrt{m}B_g/(1-\lambda)$, which together with (\ref{47}) implies that $\|\tilde{y}_{i,t+1}-\bar{y}_t\|$ is bounded. At this stage, the boundedness of $\|\tilde{y}_{i,t+1}-\bar{y}_t\|$ and $\bar{y}_t$ (by Lemma \ref{l2}) yields that $\tilde{y}_{i,t+1}$ is bounded, which together with the boundedness of $w_{i,t}$ in Lemma \ref{l2} leads to that $\hat{y}_{i,t}$ is bounded. At this point, invoking (\ref{ag5}), (\ref{14}) and boundedness of $\hat{y}_{i,t}$, it can be concluded that $y_{i,t}$ is bounded, that is, there exists $B_y>0$ such that $\|y_{i,t}\|\leq B_y$ and $\|\hat{y}_{i,t}\|\leq B_y$ for all $i\in[N]$, thus finishing the proof of (\ref{44}).

What follows is the proof of (\ref{45}). Let us first show that $\|\hat{\mu}_{i,t}\|\leq \frac{w_{i,t+1}B_y}{\beta_t r^2}$ by induction. It is easy to see that $\hat{\mu}_{i,0}\leq \frac{w_{i,1}B_y}{\beta_0 r^2}$ due to $\beta_0=1$ and $\mu_{i,0}=0$ for all $i\in[N]$. Assume now that it is true at time instant $t$ for all $i\in[N]$, and it suffices to show that it remains true at time $t+1$. At first step, it can be obtained that for all $i\in[N]$,
\begin{align}
\hat{\mu}_{i,t}+\alpha_t\Big(\frac{\hat{y}_{i,t}}{w_{i,t+1}}-\beta_t\hat{\mu}_{i,t}\Big)&=(1-\alpha_t\beta_t)\hat{\mu}_{i,t}+\frac{\alpha_t\hat{y}_{i,t}}{w_{i,t+1}}                   \nonumber\\
&\hspace{-0.2cm}\leq (1-\alpha_t\beta_t)\frac{w_{i,t+1}B_y}{\beta_t r^2}+\frac{\alpha_t B_y}{r}        \nonumber\\
&\hspace{-0.2cm}=\Big(1-\alpha_t\beta_t+\frac{r\alpha_t\beta_t}{w_{i,t+1}}\Big)\frac{w_{i,t+1}B_y}{\beta_t r^2}      \nonumber\\
&\hspace{-0.2cm}\leq \frac{w_{i,t+1}B_y}{\beta_t r^2},                                                            \label{50}
\end{align}
where we have used (\ref{44}) and $w_{i,t+1}\geq r$ to gain the first inequality and $w_{i,t+1}\geq r$ to obtain the last inequality. Therefore, in light of (\ref{ag4}) and (\ref{50}), one has that $\mu_{i,t+1}\leq w_{i,t+1}B_y/(\beta_tr^2)$ for all $i\in[N]$, thereby yielding that
\begin{align}
\hat{\mu}_{i,t+1}&=\sum_{j=1}^N a_{ij,t+1}\mu_{j,t+1}\leq \frac{B_y}{\beta_t r^2}\sum_{j=1}^N a_{ij,t+1} w_{i,t+1}         \nonumber\\
&= \frac{w_{i,t+2}B_y}{\beta_t r^2}                                \nonumber\\
&\leq \frac{w_{i,t+2}B_y}{\beta_{t+1} r^2},                          \label{52}
\end{align}
where (\ref{ag1}) and $\beta_{t+1}\leq \beta_t$ have been used to obtain the last equality and inequality, respectively. Therefore, the assertion $\hat{\mu}_{i,t}\leq w_{i,t+1}B_y/(\beta_{t}r^2)$ holds for all $t\geq 0$ and $i\in[N]$.

Now, note that $\hat{\mu}_{i,t}=\sum_{j=1}^N a_{ij,t}\mu_{j,t}\geq a_{ii,t}\mu_{i,t}$. It can be obtained that $\|\mu_{i,t}\|\leq \hat{\mu}_{i,t}/a_{ii,t}\leq w_{i,t+1}B_y/(\beta_t r^2a)$, where $a_{ii,t}\geq a$ in Assumption \ref{a1} has been used. This ends the proof.

\subsection{Proof of Lemma \ref{l4a}}

Invoking Lemma \ref{l1} implies that
\begin{align}
&\sum_{t=1}^T\sum_{i=1}^N \|\tilde{\mu}_{i,t+1}-\bar{\mu}_t\|         \nonumber\\
&\leq \frac{8N}{r}\sum_{t=1}^T\Big(\lambda^t\|\mu_0\|_1+\sum_{k=1}^t\lambda^{t-k}\|\epsilon_{\mu,k}\|_1\Big),               \label{74}
\end{align}
where $\epsilon_{\mu,k}:=col(\epsilon_{\mu_{1,k}},\ldots,\epsilon_{\mu_{N,k}})$ with $\epsilon_{\mu_{i,k}}$ being defined in (\ref{41}) and $\mu_{0}:=col(\mu_{1,0},\ldots,\mu_{N,0})$. In view of Lemma \ref{l0}, (\ref{41}), and (\ref{pr2}), we have that
\begin{align}
\|\epsilon_{\mu_{i,t+1}}\|_1&\leq \sqrt{m}\|\epsilon_{\mu_{i,t+1}}\|           \nonumber\\
&\leq \alpha_t\sqrt{m} \|\frac{\hat{y}_{i,t}}{w_{i,t+1}}-\beta_t\hat{\mu}_{i,t}\|       \nonumber\\
&\leq \frac{\alpha_t \sqrt{m}B_y(N+1)}{r^2},             \label{75}
\end{align}
where Lemmas \ref{l2} and \ref{l3} have been applied to obtain the last inequality. Therefore, in light of $\|\epsilon_{\mu,k}\|_1=\sum_{i=1}^N \|\epsilon_{\mu_{i,k}}\|_1$ and $\mu_{i,0}=0$ for all $i\in[N]$, it follows from (\ref{74}) and (\ref{75}) that
\begin{align}
&\sum_{t=1}^T\sum_{i=1}^N \|\tilde{\mu}_{i,t+1}-\bar{\mu}_t\|         \nonumber\\
&\hspace{1.1cm}\leq \frac{8 N^2(N+1)\sqrt{m}B_y}{r^3}\sum_{t=1}^T\sum_{k=1}^t\lambda^{t-k}\alpha_{k-1},               \label{76}
\end{align}
which, together with the fact that
\begin{align}
\sum_{t=1}^T\sum_{k=1}^t \lambda^{t-k}\alpha_{k-1}&=\sum_{t=0}^{T-1}\lambda^t\sum_{k=0}^{T-t-1}\alpha_k\leq \sum_{t=0}^{T-1}\lambda^t \sum_{k=0}^{T-1}\alpha_k           \nonumber\\
&\leq \frac{1}{1-\lambda}\sum_{k=0}^{T-1}\alpha_k,             \label{77}
\end{align}
results in (\ref{78}).

Similarly, to show (\ref{83}), one has by Lemma \ref{l1} that
\begin{align}
&\sum_{t=1}^T\sum_{i=1}^N \|\tilde{y}_{i,t+1}-\bar{y}_t\|         \nonumber\\
&\leq \frac{8N}{r}\sum_{t=1}^T\Big(\lambda^t\|y_0\|_1+\sum_{k=1}^t\lambda^{t-k}\|\epsilon_{y,k}\|_1\Big),               \label{80}
\end{align}
where $\epsilon_{y,k}:=col(\epsilon_{y_{1,k}},\ldots,\epsilon_{y_{N,k}})$ with $\epsilon_{y_{i,k}}$ being defined in (\ref{42}). In light of Lemma \ref{l0}, (\ref{42}), and (\ref{16}), one has that
\begin{align}
\|\epsilon_{y_{i,t+1}}\|_1&\leq \sqrt{m}\|\epsilon_{y_{i,t+1}}\|           \nonumber\\
&\leq \sqrt{m}C_g \|x_{i,t+1}-x_{i,t}\|.             \label{81}
\end{align}
Invoking (\ref{ag3}) and (\ref{pr2}) leads to that $\|x_{i,t+1}-x_{i,t}\|\leq \alpha_t\|s_{i,t+1}\|$, which together with (\ref{17a}), (\ref{17b}), (\ref{25}), and (\ref{45}) implies that
\begin{align}
\|\epsilon_{y_{i,t+1}}\|_1\leq \sqrt{m}C_g\alpha_t\Big(C_f+\frac{B_yC_g}{\beta_t r^2}\Big).             \label{82}
\end{align}

Then, similar to (\ref{75})-(\ref{77}), one can obtain (\ref{83}). This ends the proof.

\subsection{Proof of Lemma \ref{l4}}

To show (\ref{56}), invoking (\ref{ag3}) and (\ref{pr2}) yields that
\begin{align}
\|x_{i,t+1}-x_i\|^2&\leq \|x_{i,t}-x_i-\alpha_t s_{i,t+1}\|^2        \nonumber\\
&=\|x_{i,t}-x_i\|^2+\alpha_t^2\|s_{i,t+1}\|^2                \nonumber\\
&\hspace{0.35cm}-2\alpha_t s_{i,t+1}^\top (x_{i,t}-x_i),~~~\forall x\in X       \label{59}
\end{align}
in which, in view of (\ref{25}), the last term can be manipulated as
\begin{align}
&-2\alpha_t s_{i,t+1}^\top (x_{i,t}-x_i)          \nonumber\\
&= -2\alpha_t[\partial^\top f_{i,t}(x_{i,t})+ \tilde{\mu}_{i,t+1}^\top \partial^\top g_{i}(x_{i,t})](x_{i,t}-x_i)        \nonumber\\
&\leq -2\alpha_t [f_{i,t}(x_{i,t})-f_{i,t}(x_i)+\tilde{\mu}_{i,t+1}^\top(g_{i}(x_{i,t})-g_{i}(x_i))]                      \nonumber\\
&=-2\alpha_t[L_{i,t}(x_{i,t},\bar{\mu}_t)-L_{i,t}(x_{i},\bar{\mu}_t)            \nonumber\\
&\hspace{1.35cm}+(\tilde{\mu}_{i,t+1}-\bar{\mu}_t)^\top(g_{i}(x_{i,t})-g_{i}(x_i))],       \label{60}
\end{align}
where the convexity of $f_{i,t},g_{i}$ (i.e., (\ref{5})) and $\tilde{\mu}_{i,t+1}\geq \textbf{0}$ have been exploited for obtaining the inequality, and (\ref{19}) has been used in the last equality. Note that $\|s_{i,t+1}\|\leq C_f+C_gB_y/(\beta_t r^2)$ by (\ref{17a}), (\ref{17b}), (\ref{25}) and (\ref{45}). Consequently, by combining (\ref{59}) and (\ref{60}) with (\ref{14}), preforming summations over $i\in[N]$ leads to (\ref{56}), thus ending the proof of (\ref{56}).

It remains to show (\ref{57}). To do so, calculating the average of (\ref{ag4'}) over $i\in[N]$ leads to that $\bar{\mu}_{t+1}=\bar{\mu}_t+\frac{1}{N}\sum_{i=1}^N \epsilon_{\mu_{i,t+1}}$, by which invoking (\ref{pr2}) can yield that for all $\mu\in\mathbb{R}_+^m$,
\begin{align}
&\|\bar{\mu}_{t+1}-\mu\|^2          \nonumber\\
&=\Big\|\bar{\mu}_t-\mu+\frac{\sum_{i=1}^N\epsilon_{\mu_{i,t+1}}}{N}\Big\|^2         \nonumber\\
&\leq \|\bar{\mu}_{t}-\mu\|^2+\frac{1}{N}\sum_{i=1}^N\|\epsilon_{\mu_{i,t+1}}\|^2+\frac{2}{N}\sum_{i=1}^N\epsilon_{\mu_{i,t+1}}^\top(\bar{\mu}_t-\mu)              \nonumber\\
&\leq \|\bar{\mu}_{t}-\mu\|^2+\frac{\alpha_t^2}{N}\sum_{i=1}^N\Big\|\frac{\hat{y}_{i,t}}{w_{i,t+1}}-\beta_t\hat{\mu}_{i,t}\Big\|^2                      \nonumber\\
&\hspace{0.4cm}+\frac{2}{N}\sum_{i=1}^N\epsilon_{\mu_{i,t+1}}^\top(\bar{\mu}_t-\mu),                                                   \label{61}
\end{align}
where Lemma \ref{l0} has been applied to obtain the first inequality, and (\ref{pr2}) and (\ref{41}) have been utilized in the second inequality.

Let us now consider the term $\epsilon_{\mu_{i,t+1}}^\top(\bar{\mu}_t-\mu)$ in the last inequality of (\ref{61}). It can be obtained that
\begin{align}
&\epsilon_{\mu_{i,t+1}}^\top(\bar{\mu}_t-\mu)           \nonumber\\
&=\frac{1}{w_{i,t+1}}\epsilon_{\mu_{i,t+1}}^\top (w_{i,t+1}\bar{\mu}_t-w_{i,t+1}\mu)         \nonumber\\
&=\frac{1}{w_{i,t+1}}\epsilon_{\mu_{i,t+1}}^\top (\hat{\mu}_{i,t}-w_{i,t+1}\mu)+\epsilon_{\mu_{i,t+1}}^\top(\bar{\mu}_t-\tilde{\mu}_{i,t+1})        \nonumber\\
&=\Big(\epsilon_{\mu_{i,t+1}}-\alpha_t\Big(\frac{\hat{y}_{i,t}}{w_{i,t+1}}-\beta_t\hat{\mu}_{i,t}\Big)\Big)^\top \frac{(\hat{\mu}_{i,t}-w_{i,t+1}\mu)}{w_{i,t+1}}      \nonumber\\
&\hspace{0.1cm}+\alpha_t\Big(\frac{\hat{y}_{i,t}}{w_{i,t+1}}-\beta_t\hat{\mu}_{i,t}\Big)^\top (\tilde{\mu}_{i,t+1}-\mu)+\epsilon_{\mu_{i,t+1}}^\top(\bar{\mu}_t-\tilde{\mu}_{i,t+1})    \nonumber\\
&=\Big(\epsilon_{\mu_{i,t+1}}-\alpha_t\Big(\frac{\hat{y}_{i,t}}{w_{i,t+1}}-\beta_t\hat{\mu}_{i,t}\Big)\Big)^\top\frac{(\mu_{i,t+1}-w_{i,t+1}\mu)}{w_{i,t+1}}      \nonumber\\
&\hspace{0.1cm}+\alpha_t\Big(\frac{\hat{y}_{i,t}}{w_{i,t+1}}-\beta_t\hat{\mu}_{i,t}\Big)^\top (\tilde{\mu}_{i,t+1}-\mu)+\epsilon_{\mu_{i,t+1}}^\top(\bar{\mu}_t-\tilde{\mu}_{i,t+1})    \nonumber\\
&\hspace{0.1cm}+\Big(\epsilon_{\mu_{i,t+1}}-\alpha_t\Big(\frac{\hat{y}_{i,t}}{w_{i,t+1}}-\beta_t\hat{\mu}_{i,t}\Big)\Big)^\top \frac{(\hat{\mu}_{i,t}-\mu_{i,t+1})}{w_{i,t+1}}.  \label{rp2}
\end{align}

Invoking (\ref{pr1}), (\ref{ag4}), and (\ref{41}) for the last equality of (\ref{rp2}) yields that
\begin{align}
&\epsilon_{\mu_{i,t+1}}^\top(\bar{\mu}_t-\mu)           \nonumber\\
&\leq \frac{\alpha_t}{w_{i,t+1}}\Big(\frac{\hat{y}_{i,t}}{w_{i,t+1}}-\beta_t\hat{\mu}_{i,t}\Big)^\top(\mu_{i,t+1}-\hat{\mu}_{i,t})      \nonumber\\
&\hspace{0.1cm}+\alpha_t\Big(\frac{\hat{y}_{i,t}}{w_{i,t+1}}-\beta_t\hat{\mu}_{i,t}\Big)^\top (\tilde{\mu}_{i,t+1}-\mu)+\epsilon_{\mu_{i,t+1}}^\top(\bar{\mu}_t-\tilde{\mu}_{i,t+1})  \nonumber\\
&\leq \frac{\alpha_t^2}{r}\Big\|\frac{\hat{y}_{i,t}}{w_{i,t+1}}-\beta_t\hat{\mu}_{i,t}\Big\|^2          \nonumber\\
&\hspace{0.4cm}+\alpha_t\Big(\frac{\hat{y}_{i,t}}{w_{i,t+1}}-\beta_t\hat{\mu}_{i,t}\Big)^\top (\tilde{\mu}_{i,t+1}-\mu)            \nonumber\\
&\hspace{0.4cm}+\alpha_t\Big\|\frac{\hat{y}_{i,t}}{w_{i,t+1}}-\beta_t\hat{\mu}_{i,t}\Big\|\|\tilde{\mu}_{i,t+1}-\bar{\mu}_t\|,   \label{rp3}
\end{align}
where the Cauchy-Schwarz inequality and (\ref{pr2}) have been employed in the last inequality.

At this step, substituting (\ref{rp3}) into (\ref{61}) gives rise to
\begin{align}
&\|\bar{\mu}_{t+1}-\mu\|^2       \nonumber\\
&\leq \|\bar{\mu}_t-\mu\|^2+\Big(1+\frac{2}{r}\Big)\frac{\alpha_t^2}{N}\sum_{i=1}^N\Big\|\frac{\hat{y}_{i,t}}{w_{i,t+1}}-\beta_t\hat{\mu}_{i,t}\Big\|^2         \nonumber\\
&\hspace{0.1cm}+\frac{2\alpha_t}{N}\sum_{i=1}^N\frac{\hat{y}_{i,t}^\top}{w_{i,t+1}}(\tilde{\mu}_{i,t+1}-\mu)-\frac{\alpha_t\beta_t}{N}\sum_{i=1}^N 2\hat{\mu}_{i,t}^\top(\tilde{\mu}_{i,t+1}-\mu)            \nonumber\\
&\hspace{0.1cm}+\frac{2\alpha_t}{N}\sum_{i=1}^N\Big\|\frac{\hat{y}_{i,t}}{w_{i,t+1}}-\beta_t\hat{\mu}_{i,t}\Big\|\|\tilde{\mu}_{i,t+1}-\bar{\mu}_t\|. \label{rp4}
\end{align}

For the second term on the right-hand side of (\ref{rp4}), one can conclude that
\begin{align}
\Big\|\frac{\hat{y}_{i,t}}{w_{i,t+1}}-\beta_t\hat{\mu}_{i,t}\Big\|^2&\leq 2\Big\|\frac{\hat{y}_{i,t}}{w_{i,t+1}}\Big\|^2+2\|\beta_t\hat{\mu}_{i,t}\|^2              \nonumber\\
&\leq 2\Big(\frac{B_y}{r}\Big)^2+2\Big(\frac{w_{i,t+1}B_y}{r^2}\Big)^2          \nonumber\\
&\leq \frac{4N^2B_y^2}{r^4},                          \label{67}
\end{align}
where we have employed Lemma \ref{l0} to obtain the first inequality, $w_{i,t+1}\geq r$ along with (\ref{44})-(\ref{45}) for the second inequality, and $r\leq 1$ and $w_{i,t+1}\leq N$ for the last inequality.

For the third term on the right-hand side of (\ref{rp4}), one has that
\begin{align}
&\frac{\hat{y}^\top_{i,t}}{w_{i,t+1}}(\tilde{\mu}_{i,t+1}-\mu)          \nonumber\\
&=(\tilde{y}_{i,t+1}-\bar{y}_t)^\top (\tilde{\mu}_{i,t+1}-\mu)      \nonumber\\
&\hspace{0.4cm}+\bar{y}_t^\top(\tilde{\mu}_{i,t+1}-\bar{\mu}_t)+\bar{y}_t^\top(\bar{\mu}_t-\mu)      \nonumber\\
&\leq \Big(\frac{B_y}{\beta_t r^2}+\|\mu\|\Big)\|\tilde{y}_{i,t+1}-\bar{y}_t\|+B_g\|\tilde{\mu}_{i,t+1}-\bar{\mu}_t\|       \nonumber\\
&\hspace{0.4cm}+\bar{y}_t^\top(\bar{\mu}_t-\mu)              \nonumber\\
&=\Big(\frac{B_y}{\beta_t r^2}+\|\mu\|\Big)\|\tilde{y}_{i,t+1}-\bar{y}_t\|+B_g\|\tilde{\mu}_{i,t+1}-\bar{\mu}_t\|           \nonumber\\
&\hspace{0.4cm}+\frac{1}{N}[L_{t}(x_t,\bar{\mu}_t)-L_{t}(x_t,\mu)],               \label{62}
\end{align}
where we have made use of (\ref{45}) and (\ref{le1}) for getting the inequality, and $\bar{y}_t=g(x_t)/N$ and (\ref{18}) for the last equality.

As for the fourth term on the right-hand side of (\ref{rp4}), we consider the function $h(z):=\|z\|^2$ for $z\in\mathbb{R}^m$, which is convex. Using convexity, one can obtain that
\begin{align}
h(z_1)-h(z_2)\leq \nabla^\top h(z_1)(z_1-z_2),~~~\forall z_1,z_2\in\mathbb{R}^m         \label{63}
\end{align}
which, by letting $z_1=\tilde{\mu}_{i,t+1}$ and $z_2=\mu$, follows that
\begin{align}
\|\tilde{\mu}_{i,t+1}\|^2-\|\mu\|^2\leq 2\tilde{\mu}_{i,t+1}^\top (\tilde{\mu}_{i,t+1}-\mu),         \label{64}
\end{align}
further implying that
\begin{align}
-2\hat{\mu}_{i,t}^\top(\tilde{\mu}_{i,t+1}-\mu)&=-2w_{i,t+1}\tilde{\mu}_{i,t+1}^\top(\tilde{\mu}_{i,t+1}-\mu)        \nonumber\\
&\leq w_{i,t+1}(\|\mu\|^2-\|\tilde{\mu}_{i,t+1}\|^2)   \nonumber\\
&\leq w_{i,t+1}\|\mu\|^2          \nonumber\\
&\leq N\|\mu\|^2,             \label{65}
\end{align}
where $w_{i,t+1}\leq N$ in (\ref{le1}) has been used for obtaining the last inequality.

Now, inserting (\ref{67}), (\ref{62}), and (\ref{65}) into (\ref{rp4}) gives rise to (\ref{57}), which completes the proof.

%%%%%%%%%%%%%%%%%%%%%%%%%%%%%%%%%%%%%%%%%%%%%%%%%%%%%%%%%%%%%%%%%%%%%%%%%%%%%%%%%%%%%%%%%%%%%%%%%%%%%
%\bibliographystyle{amsplain}
%\bibliographystyle{plain}
%\bibliographystyle{unsrt}
%\bibliographystyle{alpha}
%\bibliography{junfen}
%\bibliography{../ref}

%\bibliography{IEEEabrv,ownreference}
%\bibliographystyle{IEEEtran}
%\bibliography{../xiuxian_references}

%%%%%%%%%%%%%%%%%%%%%%%%%%%%%%%%%%%%%%%%%%%%%%%%%%%%%%%%%%%%%%%%%%%%%%%%%%%%%%%%%%%%%%%%%%

\end{document}